\documentclass[12pt,reqno]{amsart}

\setlength{\textheight}{23cm}
\setlength{\textwidth}{16cm}
\setlength{\topmargin}{-0.8cm}
\setlength{\parskip}{0.3\baselineskip}\hoffset=-1.4cm
\usepackage{amsfonts,amssymb}
\usepackage{textcomp}
\usepackage{graphicx}
\usepackage{epsfig}
\usepackage{latexsym}
\usepackage{amsmath,amsthm}
\usepackage{mathrsfs}
\usepackage[all,cmtip]{xy}
\usepackage[pagebackref=true]{hyperref}
\hypersetup{   pdfborder={0 0 0},   
              colorlinks = false}
\usepackage{tikz-cd}
\usepackage{xcolor}
\usepackage{enumitem}

\renewcommand*{\backref}[1]{}
\renewcommand*{\backrefalt}[4]{%
  \ifcase #1 %
    No citations.
  \or
    ($\uparrow$ #4).%
  \else
    ($\uparrow$ #4).%
  \fi%
}
\theoremstyle{plain}
\newtheorem{theorem}{Theorem}[section]
\newtheorem{lemma}[theorem]{Lemma}
\newtheorem{definition}[theorem]{Definition}

\newtheorem{remark}[theorem]{Remark}
\newtheorem*{example}{Example}

\numberwithin{equation}{section}

\makeatletter
\newcommand*\bigcd{\mathpalette\bigcd@{1.4}}
\newcommand*\bigcd@[2]{\mathbin{\vcenter{\hbox{\scalebox{#2}{$\m@th#1\bullet$}}}}}
\makeatother
\makeatletter
\newcommand*\bigcdot{\mathpalette\bigcdot@{.8}}
\newcommand*\bigcdot@[2]{\mathbin{\vcenter{\hbox{\scalebox{#2}{$\m@th#1\bullet$}}}}}
\makeatother
\makeatletter
\newcommand*\bigcdo{\mathpalette\bigcdo@{.4}}
\newcommand*\bigcdo@[2]{\mathbin{\vcenter{\hbox{\scalebox{#2}{$\m@th#1\bullet$}}}}}
\makeatother

\makeatletter
\newcommand{\pushright}[1]{\ifmeasuring@#1\else\omit\hfill$\displaystyle#1$\fi\ignorespaces}
\newcommand{\pushleft}[1]{\ifmeasuring@#1\else\omit$\displaystyle#1$\hfill\fi\ignorespaces}
\makeatother

\newcommand{\bigslant}[2]{{\raisebox{.2em}{$#1$}\left/\raisebox{-.2em}{$#2$}\right.}}

\makeatletter
\def\moverlay{\mathpalette\mov@rlay}
\def\mov@rlay#1#2{\leavevmode\vtop{%
   \baselineskip\z@skip \lineskiplimit-\maxdimen
   \ialign{\hfil$\m@th#1##$\hfil\cr#2\crcr}}}
\newcommand{\charfusion}[3][\mathord]{
    #1{\ifx#1\mathop\vphantom{#2}\fi
        \mathpalette\mov@rlay{#2\cr#3}
      }
    \ifx#1\mathop\expandafter\displaylimits\fi}
\makeatother

\newcommand{\bigcupdot}{\charfusion[\mathop]{\bigcup}{\cdot}}



\newcommand{\la}[1]{\mathbb{#1}}

\hypersetup{colorlinks, citecolor=blue, filecolor=black, linkcolor=red}
%

\begin{document}
\baselineskip=15.5pt

\title{Moduli spaces of  Hom-Lie algebroid connections}

\author{Ayush~Jaiswal}
\address{Department of Mathematics, IISER Tirupati,
 Srinivasapuram-Jangalapalli Village, Panguru (G.P) Yerpedu Mandal, Tirupati - 517619, Chitoor District, Andhra Pradesh India.}
\email{ayushjwl.math@gmail.com; ayushjaiswal@labs.iisertirupati.ac.in}
\subjclass[2000]{Primary: 32G13; Secondary: 17B61}
\keywords{Hom-Lie algebroid, Connections, Hom-Gauge group, Moduli space}

\thanks{AJ is supported by the Institute Post-Doctoral Fellowship (IISER-T/Offer/PDRF(M)/A.J/04/2023) by IISER Tirupati.}
\date{}

\begin{abstract}
We have studied irreducible Hom-Lie algebroid connections for Hom-bundle and prove that the H-gauge theoretic moduli space has a Hausdorff Hilbert manifold structure. This work generalizes some known results about simple semi-connections and Lie algebroid connections for complex vector bundles on compact complex manifold.
\end{abstract}
\maketitle
\section{Introduction}
In \cite{JTHDLSDS}, Hartwig, Larson and Silverstov have introduced Hom-Lie algebra, while studying deformations of Witt and Virasoro algebras . A Hom-Lie algebra is a generalization of a Lie algebra. Similar to the case of Lie algebra, there is a Hom-Jacobi identity, which is also known as Jacobi identity, twisted by a linear map. There is a growing interest in Hom-Lie algebra as well as Hom-lie algebroids; some articles in this direction are \cite{LGCJT,LCJLYS,BSFS}.

Camille Laurent-Gengouxa and Joana Teles \cite{LGCJT}  introduced the notions of Hom-Lie algebroid, Hom-Gerstenhaber algebra, and Hom-Poisson structure. In \cite{LCJLYS}, Liqiang Cai, Jiefeng Liu, and Yunhe Sheng modified the definition of Hom-Lie algebroid and gave its dual description. In section \ref{sec-2}, we have described space of (irreducible) Hom-Lie algebroid connections $\big(\widehat{A}(E,\la{L})\big)$ $A(E,\la{L})$ and its affine space structure for a given Hom-bundle $E$ and a Hom-Lie algebroid $\la{L}$ on a smooth manifold $X$, following the definition of Cai, Liu and Sheng.

Gauge theory is useful in describing differential geometric invariants of manifolds like Seiberg-Witten and Donaldson invariants (see \cite{SKDPBK}), which are solutions of some gauge invariant equations.

In Section \ref{sec-3}, we have described Hom-gauge group $\text{H-Gau}(E)$ which has a Lie group structure, associated moduli space of irreducible Hom-Lie algebroid connections 
$$
\widehat{B}(E,\la{L})=\bigslant{\widehat{A}(E,\la{L})}{\text{H-Gau}(E)}$$ 
and their Sobolev completions in Section \ref{sec-4}. Furthermore, in Section \ref{sec-4}, we have described Hilbert manifold structure on the Sobolev space $\widehat{B}(E,\la{L})_l$ and proved 
$$
\widehat{p}:\widehat{A}(E,\la{L})_l\rightarrow \widehat{B}(E,\la{L})_l
$$
is a principal $\text{H-Gau}(E)^r_{l+1}$-bundle. Libor Krizka has proved similar results for Lie algebroid connections \cite{LK}.


\section{Preliminaries}\label{sec-2}
This section discusses the basic definitions of Hom-Lie algebra, Hom-Lie algebroid, and associated connections. Also, we verify the space of Hom-Lie algebroid connections is an affine space. We will use the notation $\mathbb{K}$ for the field $\mathbb{C}$ as well as $\mathbb{R}$. For more details about Hom-Lie algebras and Hom-Lie algebroids, see \cite{LCJLYS}. 
\subsection{Hom-Lie-algeba}
For a given smooth manifold $X$, with a smooth morphism $\phi:X\rightarrow X$, there is a canonical morphism (pullback) of the ring of smooth functions
$$
\phi^\star:C^\infty(X)\rightarrow C^\infty(X)
$$
that is $\phi^\star(fg)=\phi^\star(f)\phi^\star(g)$ for all $f,g\in C^\infty(X)$.

Let $E\rightarrow X$ be a smooth complex (resp. real) vector bundle over $X$ and $\phi:X\rightarrow X$ be a smooth map. The pullback bundle of $E$ along $\phi$, which we will denoted by $\phi^!E$ and for any $s\in \Gamma(X,E)$, we use $s^!\in\Gamma(X,\phi^!E)$ to denote the corresponding pullback section, i.e. $s^!(x)=\phi^\star(s)(x)=s\big(\phi(x)\big)$ for $x\in M$.
\begin{definition}\rm{
    For a smooth manifold $X$, an algebra morphism $D:C^\infty(X)\rightarrow C^\infty(X)$ is called $(\sigma,\tau)$-derivation, if 
    $$
    D(fg)=\sigma(f)D(g)+\tau(g)D(f)
    $$
for $f,g\in C^\infty(X);\sigma, \tau:C^\infty(X)\rightarrow C^\infty(X)$ ring morphisms.
    }
\end{definition}
The collection of $(\phi^\star,\phi^\star)$-derivations for the commutative $\mathbb{K}$-algebra $C^\infty(X)$ will be denoted by $\text{Der}_{\phi^\star,\phi^\star}\big(C^\infty(X)\big)$. For a given $D\in \text{Der}_{\phi^\star,\phi^\star}\big(C^\infty(X)\big)$, $\phi^\star\circ D\circ (\phi^\star)^{-1}\in \text{Der}_{\phi^\star,\phi^\star}\big(C^\infty(X)\big)$ because
\begin{align*}
\phi^\star\circ D\circ (\phi^\star)^{-1}(fg)&=\phi^\star\circ D\big((\phi^\star)^{-1}(f)(\phi^\star)^{-1}(g)\big)\\
&=\phi^\star\Big(f\big(D\circ (\phi^\star)^{-1}\big)(g)+g\big(D\circ(\phi^\star)^{-1}\big)(f)\Big)\\
&=\phi^\star(f)\big(\phi^\star\circ D\circ (\phi^\star)^{-1}\big)(g)+\phi^\star(g)\big(\phi^\star\circ D\circ(\phi^\star)^{-1}\big)(f)
\end{align*}
We have a canonical map
$$
\text{Ad}_{\phi^\star}:\text{Der}_{\phi^\star,\phi^\star}\big(C^\infty(X)\big)\rightarrow \text{Der}_{\phi^\star,\phi^\star}\big(C^\infty(X)\big)\quad\big(D\mapsto \phi^\star\circ D\circ (\phi^\star)^{-1}\big)
$$
\begin{remark}\label{pullbackact}\rm{
A pulled back section $D^!\in \Gamma(X,\phi^!T_X)$ acts on the ring of smooth functions given by
\begin{align*}
D^!(gf)=\big(D(gf)\big)^!=\big(gD(f)+fD(g)\big)^!&=g^!\big(D(f)\big)^!+f^!\big(D(g)\big)^!\\
&=g^!D^!(f)+f^!D^!(g)
\end{align*}
and can be considered as a $(\phi^\star,\phi^\star)$-derivation.
}
\end{remark}

\begin{definition}\rm{
A tuple $(V,[\cdot,\cdot]_V,\psi_V)$ containing a $\mathbb{K}$-vector space $V$, a skew-symmetric bilinear map $[\cdot, \cdot]_V: V \times V \rightarrow V$ and a morphism $\psi_V : V \rightarrow V$ satisfying the following properties  
\begin{enumerate}
\item $\psi_V([x,y]_V)=[\psi_V(x),\psi_V(y)]_V$, 
\item $[\psi_V(x),[y,z]_V]_V+[\psi_V(y),[z,x]_V]_V+ [\psi_V(z),[x,y]_V]_V=0$
\end{enumerate}
for all $x,y, z\in V$, is called a {\bf Hom-Lie algebra}.

	}
\end{definition}

The tuple $\big(V=\text{Der}_{\phi^\star,\phi^\star}\big(C^\infty(X)\big),[\cdot,\cdot]_V,\psi_V=\text{Ad}_{\phi^\star}\big)$ is a Hom-Lie algebra with the Hom-Lie bracket, given by
$$
[D_1,D_2]_V=\phi^\star\circ D_1\circ (\phi^\star)^{-1} \circ D_2 \circ (\phi^\star)^{-1}-\phi^\star\circ D_2\circ (\phi^\star)^{-1} \circ D_1 \circ (\phi^\star)^{-1}
$$
for $D_i\in \text{Der}_{\phi^\star,\phi^\star}\big(C^\infty(X)\big)$ $(i=1,2)$.
\begin{definition}\rm{
A representation of a Hom-Lie algebra $(V,[\cdot,\cdot]_V,\psi_V)$ on a vector space $V'$ with respect to $\beta \in \text{End}(V')$ is a linear map
		$\rho:V\rightarrow \text{End}(V')$, such that for all
		$x,y\in V$, the following equalities are satisfied:
        \begin{eqnarray*}
			\rho\big(\psi(x)\big)\circ \beta&=&\beta\circ \rho(x);\\
			\rho([x,y]_V)\circ
			\beta&=&\rho\big(\psi(x)\big)\circ\rho(y)-\rho\big(\psi(y)\big)\circ\rho(x).
		\end{eqnarray*}
	A representation of a Hom-Lie algebra $(V,[\cdot,\cdot]_V,\psi_V)$ on a vector space $V'$ w.r.t $\beta\in \text{End}(V')$ is denoted by  $(V',\beta,\rho)$.
}
\end{definition}
\subsection{Hom-Lie algebroid} A given vector bundle $E$ on a smooth manifold $X$, is said to have a Hom-bundle structure if there is an algebra morphism
$$
\phi_E:\Gamma(X,E)\rightarrow \Gamma(X,E)
$$
such that $\phi_E(fs)=\phi^\star(f)\phi_E(s)$ for all $f\in C^\infty(X)$ and $s\in \Gamma(X,E)$.

A Hom-bundle is said to be an invertible Hom-bundle, if the morphism $\phi$ is diffeomorphism and $\phi_E$ is an invertible linear map. From now on, we will assume a Hom-bundle is invertible.

For given two Hom-bundle $E=(E,\phi,\phi_E)$ and $F=(F,\phi,\phi_F)$, a vector bundle morphism $\psi:E\rightarrow F$ with same base is said to respect the Hom-structure, if 
$$
\psi\circ {\phi_E}={\phi_F}\circ \psi.
$$
Such a vector bundle morphism is called Hom-bundle morphism.

\begin{example} \rm{
Some examples of Hom-bundles are following.
\begin{enumerate}
\item For  a given Hom-bundle $E=(E,\phi,\phi_E)$ with an open subset $U\subset X$, satisfying $\phi(U)\subset U$, there is a Hom-bundle $(E_U,\phi_U,\phi_{E_U})$, where
$$
\phi_{E_U}:\Gamma(U,E)\rightarrow \Gamma(U,E) 
$$
is the restriction map ${\phi_E}_{\mkern 1mu \vrule height 2ex\mkern2mu \Gamma(U,E)}$. The Hom-bundle $(E_U,\phi_U,\phi_{E_U})$ is called Hom-subbundle restricted to $U$.
\item For a Hom-bundle $E=(E,\phi,\phi_E)$, there is a natural bundle $\text{End}(E)\cong \phi^!(E)\otimes \text{Hom}\big(E,\phi^!(\underline{\mathbb{K}})\big)$, where $\underline{\mathbb{K}}$ is trivial line bundle with fiber $\mathbb{K}$. The space of global sections is
$$
\Gamma\big(X,\text{End}(E)\big)=\{\psi:\Gamma(X,E)\rightarrow \Gamma(X,E)\mid \psi(fs)=\phi^\star(f)\psi(s)\}
$$
with a canonical morphism $\text{Ad}_{\phi_E}:\Gamma\big(X,\text{End}(E)\big)\rightarrow\Gamma\big(X,\text{End}(E)\big)$, given by
$$
\text{Ad}_{\phi_E}(\psi)=\phi_E\circ \psi\circ \phi_E^{-1}
$$ 
The tuple $(\text{End}(E),\phi,\text{Ad}_{\phi_E})$ is a Hom-bundle. The collection of global $\phi^\star$-linear morphisms respecting the Hom-bundle structure is given by
$$
\Gamma\big(X,\text{End}_{\phi_E}(E)\big)=\{\psi\in \Gamma(X,\text{End}(E)\mid \psi\circ\phi_E=\phi_E\circ \psi\}
$$
\item For a smooth manifold $X$ with smooth morphism $\phi:X\rightarrow X$, there is a smooth map $Ad_{\phi^\star}:\Gamma(X,\phi^!T_X)\rightarrow \Gamma(X,\phi^!T_X)$ given by,
$$
Ad_{\phi^\star}(D)=\phi^\star\circ D\circ (\phi^\star)^{-1}
$$
and the tuple $(\phi^!T_X,\phi,Ad_{\phi^\star})$ is a Hom-bundle.
\item For a given Hom-bundle $E=(E,\phi,\phi_E)$, there is a canonical map 
$$
\Gamma(X,\Lambda^pE)\rightarrow \Gamma(X,\Lambda^pE)\qquad\big(\wedge_{i=1}^pD_i\mapsto \wedge_{i=1}^p\phi_E(D_i)\big)\text{ for any }p\in \mathbb{Z}^+
$$
The induced map will also be denoted by the same symbol $\phi_E:\Lambda^pE\rightarrow \Lambda^pE$ and the tuple $(\Lambda^pE,\phi,\phi_E)$ is a Hom-bundle.
\item For a Hom-bundle $E=(E,\phi,\phi_E)$, there is a canonical map $\phi^\dagger_E:\Gamma(X,\Lambda^p E^\star) \rightarrow \Gamma(X,\Lambda^p E^\star)$ given by,
$$
\phi^\dagger_E(\omega)(X)=\phi^\star\Big(\omega\big(\phi_E^{-1}(X)\big)\Big)\quad\big(X\in \Gamma(X,\Lambda^p E)\big)
$$
The tuple $(\Lambda^p E^\star,\phi,\phi^\dagger_E)$ is a Hom-bundle.

%
\end{enumerate}
}
\end{example}

\begin{definition} \label{Hom-Lie algebroid}\rm{
A {\bf Hom-Lie algebroid} structure on a Hom-bundle $(E \rightarrow X,\phi,\phi_E)$ is a pair that consists of a Hom-Lie algebra structure $(\Gamma(X,E),[\cdot,\cdot]_{E},\phi_E)$ on the space of global sections, $\Gamma(X,E)$ and a bundle map $\mathfrak{a}_E: E \rightarrow \phi^!T_X$, called the anchor map, such that the following conditions are satisfied:
		\begin{enumerate}
		  \item For all $x, y\in \Gamma(X,E)$ and $f\in C^{\infty}(X)$, 
		  $$
		  [x,fy]_E=\phi^*(f)[x,y]_E+ \left[\mathfrak{a}_{E,X}\big(\phi_E (x)\big)(f)\right]\phi_E(y)
		  $$ 
		  where $\mathfrak{a}_{E,X}: \Gamma(X,E) \rightarrow \Gamma(X,\phi^!T_X)$ is the induced map and $\mathfrak{a}_{E,X}\big(\phi_E (x)\big)\in \Gamma(X,\phi^!T_X)$ acts on $f$ as in Remark \ref{pullbackact};
			\item $(C^{\infty}(X), \phi^*, \mathfrak{a}_{E,X})$  is a representation of the Hom-Lie algebra $(\Gamma(X,E),[\cdot,\cdot]_E,\phi_E)$ on vector space $C^\infty(X)$ w.r.t $\phi^\star\in \text{End}\big(C^\infty(X)\big)$.
		\end{enumerate}
}
\end{definition}	
A Hom-Lie algebroid is denoted by a quintuple $(E,\phi,\phi_E,[\cdot,\cdot]_E,\mathfrak{a}_E)$.
\begin{example}\rm{
Some examples of Hom-Lie algebroids are following.
\begin{enumerate}
\item For a given manifold $X$, there is a canonical Hom-bundle $(\phi^!T_X,\phi,\text{Ad}_{\phi^\star})$. Define a skew-symmetric bilinear map $[\cdot,\cdot]_{\phi^!T_X}:\phi^!TX\times \phi^!TX\rightarrow \phi^!TX$ given by
$$
[D_1,D_2]_{\phi^!TX}=\phi^\star\circ D_1\circ (\phi^\star)^{-1}\circ D_2\circ (\phi^\star)^{-1}-\phi^\star\circ D_2\circ (\phi^\star)^{-1}\circ D_1\circ (\phi^\star)^{-1}
$$
The tuple $(\phi^!TX,\phi,\text{Ad}_{\phi^\star},[\cdot,\cdot]_{\phi^!T_X},\text{id}_{\phi!T_X})$ is a Hom-Lie algebroid.
\item According to above example, pullback of tangent bundle $T_X$ along a smooth morphism $\phi:X\rightarrow X$ has a Hom-Lie algebroid structure. In general, for a given Lie algebroid $(\mathcal{A},[\cdot,\cdot]_\mathcal{A},\mathfrak{a}_\mathcal{A})$ with additional data there is an induced Hom-Lie algebroid. Let $(\mathcal{A},\phi,\alpha)$ be a Hom-bundle such that
\begin{enumerate}
\item $\alpha[x,y]_\mathcal{A}=[\alpha(x),\alpha(y)]_\mathcal{A}$
\item $\mathfrak{a}_\mathcal{A}\big(\alpha(x)\big)\circ \phi^\star(f)=\phi^\star\circ \mathfrak{a}_\mathcal{A}(x)(f)$
\end{enumerate}
then there is a canonical Hom-Lie algebroid $(A=\phi^!\mathcal{A},\phi,\phi_A=\alpha^!,[\cdot,\cdot]_A,\mathfrak{a}_A)$, where the maps $\phi_A:\Gamma(X,A)\rightarrow \Gamma(X,A)$, $[\cdot,\cdot]_A:\Gamma(X,A)\times \Gamma(X,A)\rightarrow \Gamma(X,A)$ and $\mathfrak{a}_A:A\rightarrow \phi!T_X$ are given by
$$
\phi_A(x^!)=\alpha(x)^!,\quad [x^!,y^!]_A=[x,y]_\mathcal{A}^!\text{ and }\quad \mathfrak{a}_A(x^!)=\mathfrak{a}_\mathcal{A}(x)^!
$$
\end{enumerate}
}
\end{example}
From now on, a Hom-Lie algebroid $(\la{L},\phi,\phi_\la{L},[\cdot,\cdot]_\la{L}, \mathfrak{a}_\la{L})$ will be denoted by $\la{L}$ , a Hom-bundle $(E,\phi,\phi_E)$ will be denoted by $E$ and the induced map $\mathfrak{a}_{\la{L},U}$ for any open subset $U\subset X$ will be denoted by $\mathfrak{a}_\la{L}$.


Using the dual anchor map $\mathfrak{a}_\la{L}^\star:\Gamma(X,\phi^!T_X^\star)\rightarrow \Gamma(X,\la{L}^\star)$, define a degree-1 differential operator on the ring of smooth functions 
$$
d_\la{L}=\mathfrak{a}_\la{L}^\star\circ d:C^\infty(X)\rightarrow \Gamma(X,\la{L}^\star)
$$ 
where $d:C^\infty(X)\rightarrow \Gamma(X,\phi^!T_X^\star)$ is the canonical operator induced from the de Rham operator.
The differential operator $d_\la{L}$ can be extended to higher exterior powers of $\la{L}^\star$ 
$$
d_\la{L}^p:\Gamma(X,\Lambda^p\la{L}^\star)\rightarrow \Gamma(X,\Lambda^{p+1}\la{L}^\star)
$$ 
given by
\begin{align*}
d_\la{L}^p(\omega)(\xi_0,\xi_1,\dots,\xi_p)&=\displaystyle\sum_{i=0}^p(-1)^i\mathfrak{a}_\la{L}(\xi_i)\Big(\omega\big({\phi_\la{L}}^{-1}(\xi_0),{\phi_\la{L}}^{-1}(\xi_1),\dots,\widehat{\xi_i},\dots,{\phi_\la{L}}^{-1}(\xi_p)\big)\Big)\\
&\quad +\displaystyle\sum_{i<j,i=0}^{p}(-1)^{i+j}\phi^\dagger_\la{L}(\omega)\big([{\phi_\la{L}}^{-1}(\xi_i),{\phi_\la{L}}^{-1}(\xi_j)]_\la{L},\xi_0,\xi_1,\dots,\xi_p\big)\qquad\big(\xi_i\in \Gamma(X,\la{L})\big)
\end{align*}
For each $p\geq 0$ with $d_\la{L}^0=d_\la{L}$, it is easy to observe that $d_\la{L}^{p}\circ d_\la{L}^{p-1}=0$ (see \cite[Theorem 3.7]{LCJLYS}). The graded differential complex $(\Lambda^\bullet \la{L}^\star,d_\la{L}^\bullet)$ is called {\bf Hom-Chevalley-Eilenberg-de Rham} complex or $(\phi^\dagger_\la{L},\phi^\dagger_\la{L})$-differential graded complex.

Using abuse of notation, the operator $d_\la{L}^\bullet:\Lambda^\bullet \la{L}^\star\rightarrow \Lambda^{\bullet+1}\la{L}^\star$ will be denoted by $d_\la{L}$, is called {\bf Hom-Chevalley-Eilenberg-de Rham operator}, which is a generalization of the de Rham as well as the Chevalley-Eilenberg-de Rham operator.

For $\xi,\xi_i\in \Gamma(X,\la{L})$ $(i\in I)$, the Hom-Lie derivative operator $\mathcal{L}_\xi^\la{L}:\Lambda^p \la{L}^\star\rightarrow \Lambda^p \la{L}^\star$ is given by
\begin{align*}
(\mathcal{L}_\xi^\la{L}\omega)(\xi_1,\xi_2,\dots,\xi_p)&=\mathfrak{a}_\la{L}\big(\phi_\la{L}(\xi)\big)\Big(\omega\big({\phi_\la{L}}^{-1}(\xi_1),{\phi_\la{L}}^{-1}(\xi_2),\dots,{\phi_\la{L}}^{-1}(\xi_p)\big)\Big)\\
& \qquad -\sum_{i=1}^p(\phi^\dagger_\la{L}\omega)(\xi_1,\dots,[\xi,{\phi_\la{L}}^{-1}(\xi_i)]_\la{L},\dots,\xi_p)
\end{align*}
and the Hom-Insertion operator $i_\xi^\la{L}:\Gamma(X,\Lambda^p\la{L}^\star)\rightarrow \Gamma(X,\Lambda^{p-1}\la{L}^\star)$ is given by 
\begin{align*}
(i_\xi^\la{L}\omega)(\xi_1,\xi_2,\dots,\xi_{p-1})&=\phi^\dagger_\la{L}\omega\big(\phi_\la{L}(\xi),\xi_1,\xi_2,\dots,\xi_p\big)
\end{align*}
\begin{remark}\rm{
For a smooth Hom-Lie algebroid $\la{L}$ and a smooth Hom-bundle $E$ on a smooth manifold $X$, the space of global sections of the bundle $\Lambda^k\la{L}^\star$ (resp. $\Lambda^k\la{L}^\star\otimes E$) will be denoted by $\mathcal{A}^k_{\la{L}}(X)$ (resp. $\mathcal{A}^k_{\la{L}}(X,E)$)$(0\leq k\in \mathbb{Z})$, whose elements are called smooth $\la{L}$-forms (resp. $\la{L}$-forms taking values in $E$) of type $k$.
}
\end{remark}
\begin{remark}\rm{
For a given Hom-bundle $(E,\phi,\phi_E)$ and a Hom-Lie algebroid $\la{L}$, the bundle $\Lambda^k\la{L}^\star\otimes \text{End}(E)$ has induced Hom structure with the algebra morphism
\begin{align*}
\phi_\la{L}^\dagger\otimes\text{Ad}_{\phi_E}:\Gamma\big(X,\Lambda^k\la{L}^\star\otimes \text{End}(E)\big)&\rightarrow \Gamma\big(X,\Lambda^k\la{L}^\star\otimes \text{End}(E)\big)\qquad \text{given by,}\\
\phi_\la{L}^\dagger\otimes\text{Ad}_{\phi_E}(D)&=(\phi^\dagger_\la{L}\otimes \phi_E)\circ D\circ \big(\phi_E^{-1}\big)
\end{align*}
The global sections of bundle $\Lambda^k\la{L}^\star\otimes \text{End}(E)$ respecting the Hom-structure is given by
$$
\mathcal{A}^k_\la{L}\big(X,\text{End}_{\phi_E}(E)\big)=\{ D\in \mathcal{A}^k_\la{L}\big(X,\text{End}(E)\big)\mid \phi^\dagger_\la{L}\otimes \phi_E\circ D=D\circ \phi_E\}
$$
}
\end{remark}
\subsection{Hom-Lie algebroid connection}
\begin{definition}\rm{}
Let $\la{L},E$ be a Hom-Lie algebroid and a Hom-bundle, respectively over a smooth manifold $X$. A $\mathbb{K}$-linear map $\nabla: \mathcal{A}^0_{\la{L}}(X,E)  \rightarrow \mathcal{A}^1_{\la{L}}(X,E)$
such that for $f\in C^\infty(X);s\in \Gamma(X,E)$
\begin{enumerate}
\item $\nabla(fs)=d_\la{L}(f)\phi_E(s)+\phi^\star(f)\nabla(s)$
\item $\phi^\dagger_\la{L}\otimes \phi_E\circ \nabla=\nabla\circ \phi_E$
\end{enumerate}
 is called a Hom-Lie algebroid connection or, an $\la{L}$-connection.
\end{definition}
A Hom-Lie algebroid connection is a generalization of a Lie algebroid as well as an affine connection. For a given $\xi\in \Gamma(X,\la{L})$, we have a linear morphism $\nabla_\xi\in \mathcal{A}^0_\la{L}\big(X,\text{End}_{\phi_E}(E)\big)$ such that  
\begin{align*}
\nabla_\xi(fs)&=\mathcal{L}^\la{L}_\xi(f)\phi_E(s)+\nabla_\xi(s)\phi^\star(f)\\
\nabla_{f\xi}(s)&=\phi^\star(f)\nabla_\xi(s)
\end{align*} 
for $f\in \mathcal{C}^\infty(X);s\in \Gamma(X,E)$.

The element $\nabla_\xi(fs)$ is called co-variant $\la{L}$-derivative and the morphism $\nabla_\xi$ is called co-variant $\la{L}$-differential operator in the direction of $\xi\in \Gamma(X,\la{L})$.

%
\begin{theorem}\label{operconn}
For a given $\la{L}$-connection $\nabla$, there is a degree 1 differential operator $d^\nabla:\mathcal{A}^{\bigcdot}_\la{L}(X,E)\rightarrow \mathcal{A}^{\bigcdot}_\la{L}(X,E)$, uniquely determined such that
\begin{enumerate}
\item for $\alpha\in \mathcal{A}^l_\la{L}(X),\beta\in \mathcal{A}^\bullet_\la{L}(X,E)$
\begin{align}
d^\nabla(\alpha\wedge \beta)&=d_\la{L}(\alpha)\wedge (\phi^\dagger_\la{L}\otimes \phi_E)(\beta)+(-1)^l\phi^\dagger_\la{L}(\alpha)\wedge d^\nabla(\beta)\label{operconn1} &&&\\
\phi^\dagger_\la{L}\otimes \phi_E\circ d^\nabla&=d^\nabla\circ \phi^\dagger_\la{L}\otimes \phi_E\label{operconn2} &&&
\end{align}
\item for $s\in \mathcal{A}^0_\la{L}(X,E), \xi\in \Gamma(X,\la{L})$
\begin{align*}
d^\nabla_{\mkern 1mu \vrule height 2ex\mkern2mu {\mathcal{A}^0_\la{L}(X,E)}}\equiv \nabla\text{ and }\big(d^\nabla(s)\big)(\xi)&=\nabla_{\phi_\la{L}^{-1}(\xi)}(s) &&&&&&&&&
\end{align*}
\end{enumerate}
The operator $d^\nabla$ is given by 
\begin{align}\label{connope}
d^\nabla(\omega)(\xi_0,\xi_1,\dots,\xi_p)&=\displaystyle\sum_{i=1}^p(-1)^i\nabla_{\phi_\la{L}^{-1}(\xi_i)}\Big(\omega\big(\phi_\la{L}^{-1}(\xi_0),\phi_\la{L}^{-1}(\xi_1),\dots,\widehat{\xi_i},\dots,\phi_\la{L}^{-1}(\xi_p)\big)\Big)+\nonumber\\
&\qquad\displaystyle\sum_{i<j=0}^k (-1)^{i+j}\phi^\dagger_\la{L}\otimes \phi_E(\omega)\big([\phi_\la{L}^{-1}(\xi_i),\phi_\la{L}^{-1}(\xi_j)]_\la{L},\xi_0,\xi_1,\dots,\widehat{\xi_i},\dots,\widehat{\xi_j},\dots,\xi_p\big)
\end{align}
for $\xi_i(0\leq i\leq p)\in \Gamma(X,\la{L})$.
\end{theorem}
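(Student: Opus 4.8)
The plan is to follow the Hom-twisted analogue of the classical construction of the exterior covariant derivative: first show that the two requirements force the value of $d^\nabla$ on decomposable forms, which yields uniqueness, and then verify that the invariant expression \eqref{connope} is a well-defined operator realizing these properties.

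For \emph{uniqueness}, I would use that, over a trivializing open set, every $\beta\in\mathcal{A}^\bullet_\la{L}(X,E)=\Gamma(X,\Lambda^\bullet\la{L}^\star\otimes E)$ is a finite $\mathbb{K}$-linear combination of decomposable sections $\alpha\wedge s$ with $\alpha\in\mathcal{A}^l_\la{L}(X)$ and $s\in\mathcal{A}^0_\la{L}(X,E)=\Gamma(X,E)$. Applying the Leibniz rule \eqref{operconn1} with $\beta=s$ a $0$-form, and noting that on $0$-forms $\phi^\dagger_\la{L}\otimes\phi_E$ reduces to $\phi_E$ while $d^\nabla|_{\mathcal{A}^0_\la{L}(X,E)}=\nabla$ by part (2), one is forced to have
$$
d^\nabla(\alpha\wedge s)=d_\la{L}(\alpha)\wedge\phi_E(s)+(-1)^l\,\phi^\dagger_\la{L}(\alpha)\wedge\nabla(s).
$$
Since $d^\nabla$ is required to be $\mathbb{K}$-linear, its value on these generators determines $d^\nabla(\beta)$ completely, so at most one such operator exists.

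For \emph{existence}, I would take the right-hand side of \eqref{connope} as the definition of $d^\nabla(\omega)$ and check that it defines a genuine element of $\mathcal{A}^{p+1}_\la{L}(X,E)$, i.e.\ that $d^\nabla(\omega)(\xi_0,\dots,\xi_p)$ is alternating and $C^\infty(X)$-multilinear in the arguments $\xi_i\in\Gamma(X,\la{L})$. The alternating property is encoded in the signs; for tensoriality I would replace $\xi_k$ by $f\xi_k$ and track the two sources of derivation terms — one from $\nabla_{\phi_\la{L}^{-1}(f\xi_k)}$ via the connection axiom $\nabla(fs)=d_\la{L}(f)\phi_E(s)+\phi^\star(f)\nabla(s)$, and one from the Hom-bracket expanded via $[x,fy]_\la{L}=\phi^\star(f)[x,y]_\la{L}+\mathfrak{a}_\la{L}(\phi_\la{L}(x))(f)\,\phi_\la{L}(y)$ — and verify they cancel, exactly as in the proof that $d_\la{L}$ is well defined (\cite[Theorem 3.7]{LCJLYS}). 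Granting well-definedness, part (2) is immediate: for $p=0$ the first sum gives $(d^\nabla s)(\xi)=\nabla_{\phi_\la{L}^{-1}(\xi)}(s)$, whence $d^\nabla|_{\mathcal{A}^0_\la{L}(X,E)}=\nabla$. The compatibility \eqref{operconn2} would follow by substituting $(\phi^\dagger_\la{L}\otimes\phi_E)(\omega)$ into \eqref{connope} and pushing $\phi_E$ through the covariant derivatives using $\phi^\dagger_\la{L}\otimes\phi_E\circ\nabla=\nabla\circ\phi_E$ together with the intertwining of $\phi_\la{L}$ with both the anchor and the bracket; and the Leibniz rule \eqref{operconn1} would be checked on decomposables $\alpha\wedge\beta$, where the calculation reduces to the graded Leibniz identity for $d_\la{L}$ on the Hom-Chevalley--Eilenberg--de~Rham complex plus the connection axiom.

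The main obstacle is the bookkeeping of the twisting maps in the tensoriality check and in \eqref{operconn1}. In the untwisted case the offending derivation terms cancel termwise and immediately; here the maps $\phi^\star$, $\phi_\la{L}^{\pm1}$, $\phi^\dagger_\la{L}$ and $\phi_E$ interleave inside every summand, so the cancellation only emerges after repeatedly invoking the structural identities: the representation axiom relating $\mathfrak{a}_\la{L}$, $\phi_\la{L}$ and $\phi^\star$, the connection compatibility $\phi^\dagger_\la{L}\otimes\phi_E\circ\nabla=\nabla\circ\phi_E$, and the definition $d_\la{L}=\mathfrak{a}_\la{L}^\star\circ d$. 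The delicate point is ensuring that the shifted arguments $\phi_\la{L}^{-1}(\xi_i)$ and the twisted evaluation $\phi^\dagger_\la{L}\otimes\phi_E(\omega)$ line up so that the anchor/derivation contributions produced by the connection and by the bracket are genuinely opposite; once the $p=1$ and $p=2$ cases are organized correctly, the general case follows by the same index manipulation as for $d_\la{L}$.
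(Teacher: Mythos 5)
Your proposal is correct and follows essentially the same route as the paper: uniqueness is forced by the Leibniz rule \eqref{operconn1} together with $d^\nabla|_{\mathcal{A}^0_\la{L}(X,E)}=\nabla$ on decomposable forms, and existence is obtained by verifying that the explicit formula \eqref{connope} satisfies \eqref{operconn1} (which the paper does by induction on the degree of $\alpha$) and \eqref{operconn2} (by direct substitution). The only substantive difference is that you explicitly flag the $C^\infty(X)$-multilinearity (tensoriality) of the right-hand side of \eqref{connope} as a step to be checked, which the paper takes for granted; this is a reasonable extra precaution rather than a different argument.
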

\begin{proof}
The uniqueness of the operator $d^\nabla$ satisfying properties (1) and (2) follows in the same line of arguements as in the case of affine connections only thing we need to prove is, the operator $d^\nabla$ described in the Equation \ref{connope} satisfies the properties (1) and (2). We have verified the property (1), verification of (2) is easy.
\begin{align*}
d^\nabla(f\beta)(\xi_0,\xi_1,\dots,\xi_p)&=\displaystyle\sum_{i=0}^p(-1)^i\nabla_{\phi_\la{L}^{-1}(\xi_i)}\Big(f\beta\big(\phi_\mathbb{L}^{-1}(\xi_0),\dots,\widehat{\xi}_i,\dots,\phi_\mathbb{L}^{-1}(\xi_p)\big)\Big)+\\
&\qquad\displaystyle\sum_{i<j}(-1)^{i+j}\phi_\la{L}^\dagger\otimes \phi_E(f\beta)\big([\phi_\la{L}^{-1}(\xi_i),\phi_\la{L}^{-1}(\xi_j)]_\la{L},\xi_0,\dots,\widehat{\xi}_i,\dots,\widehat{\xi}_j,\dots,\xi_p\big)\\
&=\displaystyle\sum_{i=0}^p(-1)^i\mathcal{L}^\la{L}_{\phi_\la{L}^{-1}(\xi_i)}(f)\Big(\phi^\dagger_\la{L}\otimes \phi_E(\beta)\big(\phi_\mathbb{L}^{-1}(\xi_0),\dots,\widehat{\xi}_i,\dots,\phi_\mathbb{L}^{-1}(\xi_p)\big)\Big)+\\
&\qquad \displaystyle\sum_{i=0}^p(-1)^i\phi^\star(f)\nabla_{\phi_\la{L}^{-1}(\xi_i)}\big(\beta(\phi_\mathbb{L}^{-1}(\xi_0),\dots,\widehat{\xi}_i,\dots,\phi_\mathbb{L}^{-1}(\xi_p)\big)+\\
&\qquad\qquad\displaystyle\sum_{i<j}(-1)^{i+j}\phi_\la{L}^\dagger\otimes \phi_E(f\beta)\big([\phi_\la{L}^{-1}(\xi_i),\phi_\la{L}^{-1}(\xi_j)]_\la{L},\xi_0,\dots,\widehat{\xi}_i,\dots,\widehat{\xi}_j,\dots,\xi_p\big)\\
&=\big(d_\la{L}(f)\wedge \phi^\dagger_\la{L}\otimes \phi_E(\beta)+\phi^\star(f)d^\nabla(\beta)\big)(\xi_0,\xi_1,\dots,\xi_p)
\end{align*}
Assume for $k\leq n$, the Equation \eqref{operconn1} holds, we need to prove for $k=n+1$. Let $\alpha\in \Gamma(X,\wedge^n\la{L}^\star),\beta\in \Gamma(X,\Lambda^n\la{L}^\star\otimes E)$ and $\xi\in \Gamma(X,\la{L}^\star)$,
{\allowdisplaybreaks
\begin{align*}
d^\nabla(\alpha\wedge \xi\wedge\beta)&=d_\la{L}(\alpha)\wedge \phi_\la{L}^\dagger\otimes \phi_E(\xi\wedge\beta)+(-1)^n\phi_\la{L}^\dagger(\alpha)\wedge d^\nabla(\xi\wedge\beta)\\
&=\quad d_\la{L}(\alpha)\wedge \Big(\phi_\la{L}^\dagger(\xi)\wedge\big(\phi_\la{L}^\dagger\otimes \phi_E(\beta)\big)\Big)+\\
&\qquad(-1)^n\phi_\la{L}^\dagger(\alpha)\wedge \left[d_\la{L}(\xi)\wedge \phi_\la{L}^\dagger\otimes \phi_E(\beta)-\phi_\la{L}^\dagger(\xi)\wedge d^\nabla(\beta)\right]\\
&=\quad \big(d_\la{L}(\alpha)\wedge \phi_\la{L}^\dagger(\xi)+(-1)^n\phi_\la{L}^\dagger(\alpha)\wedge d_\la{L}(\xi)\big)\wedge \big(\phi_\la{L}^\dagger\otimes \phi_E(\beta)\big)+\\
&\qquad(-1)^{n+1}\phi_\la{L}^\dagger(\alpha\wedge\xi)\wedge d^\nabla(\beta)\\
&=d_\la{L}(\alpha\wedge\xi)\wedge\phi_\la{L}^\dagger\otimes \phi_E(\beta)+(-1)^{n+1}\phi_\la{L}^\dagger(\alpha\wedge\xi)d^\nabla(\beta)
\end{align*}
}
Hence, for $n=k+1$, $d^\nabla$ described in the Equation \eqref{connope} satifies property (1). Now we will verify the Equation \eqref{operconn2}.
{\allowdisplaybreaks
\begin{align*}
d^\nabla\circ \phi_\la{L}^\dagger\otimes \phi_E(\omega)(\xi_0,\dots,\xi_p)&=\displaystyle\sum_{i=0}^p (-1)^i\nabla_{\phi_\la{L}^{-1}(\xi_i)}\Big[\big(\phi_\la{L}^\dagger\otimes \phi_E(\omega)\big)\big(\phi_\la{L}^{-1}(\xi_0),\dots,\widehat{\xi_i},\dots \phi_\la{L}^{-1}(\xi_p)\big)\Big]+\\
&\quad\displaystyle\sum_{i<j=0}^k (-1)^{i+j}\big((\phi^\dagger_\la{L}\otimes \phi_E)^2(\omega)\big)\big([\phi_\la{L}^{-1}(\xi_i),\phi_\la{L}^{-1}(\xi_j)]_\la{L},\xi_0,\dots,\widehat{\xi_i},\dots,\widehat{\xi_j},\dots,\xi_p\big)\\
&=\displaystyle\sum_{i=0}^p (-1)^i\bigg(\nabla\Big[\big(\phi_\la{L}^\dagger\otimes \phi_E(\omega)\big)\big(\phi_\la{L}^{-1}(\xi_0),\dots,\widehat{\xi_i},\dots \phi_\la{L}^{-1}(\xi_p)\big)\Big]\bigg)(\xi_i)+\\
&\quad\displaystyle\sum_{i<j=0}^k (-1)^{i+j}\big((\phi^\dagger_\la{L}\otimes \phi_E)^2(\omega)\big)\big([\phi_\la{L}^{-1}(\xi_i),\phi_\la{L}^{-1}(\xi_j)]_\la{L},\xi_0,\dots,\widehat{\xi_i},\dots,\widehat{\xi_j},\dots,\xi_p\big)\\
&=\displaystyle\sum_{i=0}^p (-1)^i\phi_E\circ\bigg(\nabla\left[\omega\big(\phi_\la{L}^{-2}(\xi_0),\dots,\widehat{\xi_i},\dots \phi_\la{L}^{-2}(\xi_p)\big)\right]\big(\phi_\la{L}^{-1}(\xi_i)\big)\bigg)+\\
&\quad\displaystyle\sum_{i<j=0}^k (-1)^{i+j}\phi_E\circ\Big(\big((\phi^\dagger_\la{L}\otimes \phi_E)(\omega)\big)\big([\phi_\la{L}^{-2}(\xi_i),\phi_\la{L}^{-2}(\xi_j)]_\la{L},\phi_\la{L}^{-1}(\xi_0),\dots,\widehat{\xi_i},\dots\\
&\qquad\qquad\qquad,\widehat{\xi_j},\dots,\phi_\la{L}^{-1}(\xi_p)\big)\Big)\\
&=\phi_E\circ \Big[d^\nabla(\omega)\big(\phi_\la{L}^{-1}(\xi_0),\dots,\phi_\la{L}^{-1}(\xi_p)\big)\Big]\\
&=\Big[\phi_\la{L}^\dagger\otimes \phi_E\circ d^\nabla(\omega)\Big]\big(\xi_0,\dots,\xi_p\big)
\end{align*}
}
\end{proof}
\begin{lemma}\label{liealgconn}
The space of $\la{L}$-connections $A(E,\la{L})$, is an affine space modeled on the vector space $\mathcal{A}^1_\la{L}\big(X,\text{End}_{\phi_E}(E)\big)$
\end{lemma}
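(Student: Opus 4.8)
The strategy is to exhibit $A(E,\la{L})$ as a torsor under $W:=\mathcal{A}^1_\la{L}\big(X,\text{End}_{\phi_E}(E)\big)$, which reduces to two verifications plus nonemptiness. First I would take two $\la{L}$-connections $\nabla_1,\nabla_2$ and set $A:=\nabla_1-\nabla_2$, a $\mathbb{K}$-linear map $\mathcal{A}^0_\la{L}(X,E)\to\mathcal{A}^1_\la{L}(X,E)$. Subtracting the Leibniz identity (1) for the two connections, the inhomogeneous terms $d_\la{L}(f)\phi_E(s)$ cancel and leave
$$
A(fs)=\phi^\star(f)\,A(s)\qquad (f\in C^\infty(X),\ s\in\Gamma(X,E)).
$$
By the description of $\Gamma\big(X,\text{End}(E)\big)$ as the $\phi^\star$-linear endomorphisms of $\Gamma(X,E)$, this $\phi^\star$-linearity identifies $A$ with a section of $\Lambda^1\la{L}^\star\otimes\text{End}(E)$, that is $A\in\mathcal{A}^1_\la{L}\big(X,\text{End}(E)\big)$. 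Subtracting the intertwining identity (2) then gives $\phi^\dagger_\la{L}\otimes\phi_E\circ A=A\circ\phi_E$, which is exactly the condition cutting out $W$ inside $\mathcal{A}^1_\la{L}\big(X,\text{End}(E)\big)$; hence $\nabla_1-\nabla_2\in W$.

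Conversely, given $\nabla\in A(E,\la{L})$ and $a\in W$, I would verify that $\nabla+a$ is again an $\la{L}$-connection. The $\mathbb{K}$-linearity is immediate; axiom (1) holds because the tensorial term $a(fs)=\phi^\star(f)a(s)$ contributes only to the $\phi^\star(f)\nabla(s)$ summand, reproducing $d_\la{L}(f)\phi_E(s)+\phi^\star(f)(\nabla+a)(s)$; and axiom (2) follows by adding the intertwining identities $\phi^\dagger_\la{L}\otimes\phi_E\circ\nabla=\nabla\circ\phi_E$ and $\phi^\dagger_\la{L}\otimes\phi_E\circ a=a\circ\phi_E$. Together with the previous paragraph this gives a free and transitive action, since any $\nabla'$ equals $\nabla+(\nabla'-\nabla)$ with $\nabla'-\nabla\in W$, exhibiting $A(E,\la{L})$ as an affine space modeled on $W$ once it is nonempty.

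The step I expect to be the main obstacle is precisely this nonemptiness. A naive partition-of-unity patching of local connections fails to respect axiom (2): since both the Leibniz rule and the intertwining condition are twisted by $\phi^\star$, a bump-function weight $\rho_\alpha$ gets transformed to $\phi^\star(\rho_\alpha)$ when commuted past $\phi^\dagger_\la{L}\otimes\phi_E$, so the glued operator no longer intertwines $\phi_E$. The correct argument is sheaf-theoretic: one first produces $\la{L}$-connections locally on a trivializing cover — solving, on each patch, for a connection compatible with the Hom-structure, i.e. realizing the Hom-defect $\phi^\dagger_\la{L}\otimes\phi_E\circ\nabla_0-\nabla_0\circ\phi_E$ of a naive operator $\nabla_0$ satisfying only (1) in the form $\phi^\dagger_\la{L}\otimes\phi_E\circ A-A\circ\phi_E$ for a tensorial $A\in\mathcal{A}^1_\la{L}\big(X,\text{End}(E)\big)$ — and then glues these local connections using that $W$ is a fine sheaf of $C^\infty(X)$-modules, so that the $W$-torsor $A(E,\la{L})$ has vanishing first cohomology and admits a global section.
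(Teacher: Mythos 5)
Your first two steps (the difference of two connections lies in $W=\mathcal{A}^1_\la{L}\big(X,\text{End}_{\phi_E}(E)\big)$, and $\nabla+a$ is again a connection for $a\in W$) are correct and essentially match the paper; you are in fact slightly more careful than the paper in checking that the intertwining axiom (2) also subtracts and adds correctly. The divergence is in the nonemptiness step. The paper does precisely the partition-of-unity gluing you reject: it transports the trivial connection $\widehat{\nabla}(f\otimes v)=d_\la{L}(f)\otimes\phi_{\underline{V}}(v)$ through local trivializations $\psi_\alpha$ chosen so that $\psi_\alpha$ intertwines $\phi_E$ with $\phi_{\underline{V}}$ and $\phi(U_\alpha)\subset U_\alpha$, and sets $\nabla(s)=\sum_\alpha \text{id}_{\la{L}^\star}\otimes\psi_\alpha^{-1}\circ\widehat{\nabla}\big(\psi_\alpha(g_\alpha s)\big)$; the Leibniz rule is verified in detail and axiom (2) is then asserted. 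Your observation that the weights $g_\alpha$ get twisted to $\phi^\star(g_\alpha)$ when pushed past the Hom-structure is a legitimate concern about that asserted step, so refusing to take the gluing for granted is reasonable.

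However, your replacement argument has a genuine gap at exactly the same point. The assignment $U\mapsto\mathcal{A}^1_\la{L}\big(U,\text{End}_{\phi_E}(E)\big)$ is \emph{not} a sheaf of $C^\infty$-modules: if $A$ satisfies $\phi^\dagger_\la{L}\otimes\phi_E\circ A=A\circ\phi_E$, then $\phi^\dagger_\la{L}\otimes\phi_E\circ(fA)=\phi^\star(f)\cdot\big(\phi^\dagger_\la{L}\otimes\phi_E\circ A\big)$ while $(fA)\circ\phi_E=f\cdot\big(A\circ\phi_E\big)$, so $fA\in W$ only where $f=\phi^\star(f)$ on the support of $A$. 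Hence $W$ is a module only over the subalgebra of $\phi$-invariant smooth functions, and partitions of unity subordinate to a given cover need not exist in that subalgebra (for $\phi$ an irrational rotation of $S^1$ the only invariant smooth functions are constants). So the claim ``$W$ is a fine sheaf, hence the $W$-torsor has vanishing $H^1$ and a global section'' is not available: the very $\phi^\star$-twist that breaks the naive gluing also destroys fineness. Local existence is not the issue (the paper's locally trivialized connection already satisfies both axioms on each $U_\alpha$, so you do not need to solve for the Hom-defect); what is missing is a mechanism for globalizing that survives the twist, e.g.\ some $\phi$-equivariant averaging or a direct verification that the glued operator does intertwine $\phi_E$. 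As written, your nonemptiness argument does not close.
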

\begin{proof}
The proof follows in two steps. First we prove, the space of $\la{L}$-connections $A(E,\la{L})$ is non-empty while in second step, we prove it is an affine space modelled on $\mathbb{K}$-vector space $\mathcal{A}^1_\la{L}\big(X,\text{End}_{\phi_E}(E)\big)$. 
\begin{enumerate}
\item Let $\{(U_\alpha,\psi_\alpha)\}_{\alpha\in I}$ be a smooth trivialization for the Hom-bundle $E$ with $\phi(U_\alpha)\subset U_\alpha$, take $(g_\alpha)_{\alpha\in \mathcal{A}}$ be a smooth partition of unity sub-ordinate to covering $(U_\alpha)_{\alpha\in I}$. 
Let $(\underline{V}=X\times V,\phi,\phi_{\underline{V}})$ be a trivial Hom-bundle, where ${\phi_{\underline{V}}}_{\mkern 1mu \vrule height 2ex\mkern2mu \Gamma(U_\alpha,\underline{V})}:\Gamma(U_\alpha,\underline{V})\rightarrow \Gamma(U_\alpha,\underline{V})$ is given by
\begin{align}\label{map1}
{\phi_{\underline{V}}}_{\mkern 1mu \vrule height 2ex\mkern2mu \Gamma(U_\alpha,\underline{V})}=\psi_\alpha\circ{\phi_E}_{\mkern 1mu \vrule height 2ex\mkern2mu \Gamma(U_\alpha,E)}\circ \psi_\alpha^{-1}
\end{align}
Let $\widehat{\nabla}$ be the trivial $\la{L}$-connection on trivial Hom-bundle $(\underline{V}, \phi, \phi_{\underline{V}})$ that is
$$
\widehat{\nabla}(f\otimes v)=d_{\la{L}}(f)\otimes \phi_{\underline{V}}(v)\quad \big(f\otimes v\in \Gamma(X,\underline{V})\big)
$$
For $s\in \Gamma(X,E)$, $\text{supp}(g_\alpha s)\subset U_\alpha$ implies $\text{supp}\Big(\psi_\alpha\big((g_\alpha s)_{\mkern 1mu \vrule height 2ex\mkern2mu U_\alpha}\big)\Big)\subset U_\alpha$, extending $\psi_\alpha\big((g_\alpha s)_{\mkern 1mu \vrule height 2ex\mkern2mu U_\alpha}\big)$ to $X$ and applying the trivial connection $\widehat{\nabla}$, we have 
$$\text{supp}\bigg(\widehat{\nabla}\Big(\psi_\alpha\big((g_\alpha s)_{\mkern 1mu \vrule height 2ex\mkern2mu U_\alpha}\big)\Big)\bigg)\subset U_\alpha.$$ Furthermore, $\text{supp}\Bigg(\text{id}_{\la{L}^\star}\otimes \psi_\alpha^{-1}\circ \bigg(\widehat{\nabla}\Big(\psi_\alpha\big((g_\alpha s)_{\mkern 1mu \vrule height 2ex\mkern2mu U_\alpha}\big)\Big)\bigg)_{\mkern 1mu \vrule height 2ex\mkern2mu U_\alpha}\Bigg)\subset U_\alpha$, which can be extended to a global section, $\text{id}_{\la{L}^\star}\otimes \psi_\alpha^{-1}\circ \bigg(\widehat{\nabla}\Big(\psi_\alpha\big((g_\alpha s)_{\mkern 1mu \vrule height 2ex\mkern2mu U_\alpha}\big)\Big)\bigg)_{\mkern 1mu \vrule height 2ex\mkern2mu U_\alpha}\in \Gamma(X,\la{L}^\star\otimes E)$. Define, a natural map
$\nabla:\Gamma(X,E)\rightarrow \Gamma(X,\la{L}^\star\otimes E)$ given by
$$
\nabla(s)=\sum_{\alpha\in I} \text{id}_{\la{L}^\star}\otimes \psi_\alpha^{-1}\circ \bigg(\widehat{\nabla}\Big(\psi_\alpha\big((g_\alpha s)_{\mkern 1mu \vrule height 2ex\mkern2mu U_\alpha}\big)\Big)\bigg)_{\mkern 1mu \vrule height 2ex\mkern2mu U_\alpha}
$$
The map $\nabla$ is an $\la{L}$-connection because for $f\in C^\infty(X), s\in \Gamma(X,E)$, we have
{\allowdisplaybreaks
\begin{align*}
\nabla(fs)&=\sum_{\alpha\in I} \text{id}_{\la{L}^\star}\otimes \psi_\alpha^{-1}\circ \bigg(\widehat{\nabla}\Big(\psi_\alpha\big((g_\alpha fs)_{\mkern 1mu \vrule height 2ex\mkern2mu U_\alpha}\big)\Big)\bigg)_{\mkern 1mu \vrule height 2ex\mkern2mu U_\alpha}\\
&=\sum_{\alpha\in I} \text{id}_{\la{L}^\star}\otimes \psi_\alpha^{-1}\circ \bigg(\widehat{\nabla}\Big(f\psi_\alpha\big((g_\alpha s)_{\mkern 1mu \vrule height 2ex\mkern2mu U_\alpha}\big)\Big)\bigg)_{\mkern 1mu \vrule height 2ex\mkern2mu U_\alpha}\\
&=\sum_{\alpha\in I} \text{id}_{\la{L}^\star}\otimes \psi_\alpha^{-1}\circ \bigg(d_\la{L}(f)\otimes\phi_{\underline{V}}\Big(\psi_\alpha\big((g_\alpha s)_{\mkern 1mu \vrule height 2ex\mkern2mu U_\alpha}\big)\Big)+\phi^\star(f)\widehat{\nabla}\Big(\psi_\alpha\big((g_\alpha s)_{\mkern 1mu \vrule height 2ex\mkern2mu U_\alpha}\big)\Big)\bigg)_{\mkern 1mu \vrule height 2ex\mkern2mu U_\alpha}\\
&=\sum_{\alpha\in I}  \left[d_\la{L}(f)\otimes {\phi_E}_{\mkern 1mu \vrule height 2ex\mkern2mu \Gamma(U_\alpha,E)}\big((g_\alpha s)_{\mkern 1mu \vrule height 2ex\mkern2mu U_\alpha}\big)+\phi^\star(f)\Bigg(\text{id}_{\la{L}^\star}\otimes \psi_\alpha^{-1}\circ\bigg(\widehat{\nabla}\Big(\psi_\alpha\big((g_\alpha s)_{\mkern 1mu \vrule height 2ex\mkern2mu U_\alpha}\big)\Big)\bigg)_{\mkern 1mu \vrule height 2ex\mkern2mu U_\alpha}\Bigg)\right]\\
&=d_\la{L}(f)\otimes \phi_E(s)+\phi^\star(f)\nabla(s)
\end{align*}
}
Because trivial $\la{L}$-connection $\widehat{\nabla}$ satisfies $\phi^\dagger_\la{L}\otimes \phi_V\circ \widehat{\nabla}=\widehat{\nabla}\circ \phi_V$ and using the Equation \eqref{map1}, we have $\phi^\dagger_\la{L}\otimes \phi_E\circ \nabla=\nabla\circ \phi_E$. 
Hence, the space of $\la{L}$-connections, $A(E,\la{L})$ is non-empty.
\item Let $\nabla, \nabla'\in A(E,\la{L})$, then 
\begin{align*}
(\nabla-\nabla')(fs)&=\big(d_\la{L}(f)\phi_E(s)+\phi^\star(f)\nabla(s)\big)-\big(d_\la{L}(f)\phi_E(s)+\phi^\star(f)\nabla'(s)\big)\\
&=\phi^\star(f)(\nabla-\nabla')(s)
\end{align*}
\end{enumerate}
It proves, the space $A(E,\la{L})$ is an affine space modelled on the vector space $\mathcal{A}^1_\la{L}\big(X,\text{End}_{\phi_E}(E)\big)$.
\end{proof}
\begin{remark}\rm{
Using Lemma \ref{liealgconn}, any Hom-Lie algebroid connection $\nabla\in A(E,\la{L})$ can be expressed as $\nabla=\nabla_0+\alpha$ for some $\alpha\in \mathcal{A}^1_\la{L}\big(X,\text{End}_{\phi_E}(E)\big)$, where $\nabla_0:\mathcal{A}^0_\la{L}(X,E)\rightarrow \mathcal{A}^1_\la{L}(X,E)$ is the trivial $\la{L}$-connection. The space of $\la{L}$-connections is given by
\begin{align}
A(E,\la{L})(X)&=\left\{\nabla_0+\alpha\mid \alpha\in \mathcal{A}^1_\la{L}\big(X,\text{End}_{\phi_E}(E)\big)\right\}
\end{align}
}
\end{remark}
\begin{remark}\label{remar}\rm{
Any $\la{L}$-connection $\nabla^E$ on a Hom-bundle $E$ induces an $\la{L}$-connection $\nabla^{\text{End}(E)}$ for the bundle $\text{End}(E)$, given by
\begin{align*}
(\nabla^{\text{End}(E)}T)(s)&=\phi^\dagger_\la{L}\otimes\phi_E\circ\nabla^E\circ\phi_E^{-1}\circ T\circ\phi_E^{-1}(s)-\phi^\dagger_\la{L}\otimes \phi_E\circ\text{id}_{\la{L}^\star}\otimes(T\circ \phi_E^{-1})\circ \nabla^E\circ\phi_E^{-1}(s)\\
&=[\nabla^E,T]_\la{L}(s)
\end{align*}
for $T\in \mathcal{A}^0_\la{L}\big(X,\text{End}(E)\big);s\in \Gamma(X,E)$.
}
\end{remark}
\begin{remark}\label{remar1}\rm{
For $T_i\in \mathcal{A}^0_\la{L}\big(X,\text{End}_{\phi_E}(E)\big)$ $(i=1,2)$ and a Hom-Lie algebroid connection $\nabla^E$, we have
\begin{align*}
\big(\nabla^{\text{End}(E)}(T_1\circ \phi_E^{-1}\circ T_2)\big)&=[\nabla^E,T_1\circ \phi_E^{-1}\circ T_2]_\la{L}\\
&=\phi^\dagger_\la{L}\otimes \phi_E\circ[\nabla^E,T_1\circ \phi_E^{-1}\circ T_2]\circ \phi_E^{-1}\\
&=\phi^\dagger_\la{L}\otimes \phi_E\circ\big([\nabla^E,T_1]\circ \phi_E^{-1}\circ T_2+\text{id}_{\la{L}^\star}\otimes (T_1\circ \phi_E^{-1})\circ[\nabla^E,T_2]\big)\circ \phi_E^{-1}\\
&=[\nabla^E,T_1]_\la{L}\circ \phi_E^{-1}\circ T_2+\text{id}_{\la{L}^\star}\otimes (T_1\circ \phi_E^{-1})\circ[\nabla^E,T_2]_\la{L}
\end{align*}
}
\end{remark}

The elements of the vector space $\mathcal{A}^{\bullet}_\la{L}\big(X,\text{End}(E)\big)$ has a canonical associative algebra structure with wedge product as multiplication operation given by 
\begin{equation}\label{wedgehom}
\omega\wedge\tau(\xi_1,\xi_2,\dots,\xi_{p+q})=\frac{1}{p!q!}\displaystyle\sum_{\sigma\in S(p+q)} Sign(\sigma)\omega(\xi_{\sigma(1)},\dots,\xi_{\sigma(p)})\circ\phi_E^{-1}\circ\tau(\xi_{\sigma(p+1},\dots,\xi_{\sigma(p+q)})
\end{equation}
for $\omega\in \mathcal{A}^p_\la{L}\big(X,\text{End}(E)\big)$ and $\tau\in \mathcal{A}^q_\la{L}\big(X,\text{End}(E)\big)$, $\omega\wedge \tau\in \mathcal{A}^{p+q}_\la{L}\big(X,\text{End}(E)\big)$.

The space $\mathcal{A}^{\bullet}_\la{L}\big(X,\text{End}(E)\big)$ also has, Hom-Lie algebra structure with the Hom-Lie bracket $[\cdot,\cdot]_\la{L}$ can be described as follows.

For $\omega,\tau\in \mathcal{A}^0_\la{L}\big(X,\text{End}(E)\big)$
\begin{equation}\label{eq.1}
[\omega,\tau]_{\la{L}}=\phi_E\circ\omega\circ\phi_E^{-1}\circ \tau\circ \phi_E^{-1}-\phi_E\circ\tau\circ\phi_E^{-1}\circ \omega\circ \phi_E^{-1}
\end{equation}
and for $\Omega\in \mathcal{A}^p_\la{L}\big(X,\text{End}(E)\big),\mathrm{T}\in \mathcal{A}^q_\la{L}\big(X,\text{End}(E)\big)$ $(p,q>0)$
\begin{align}\label{liealgebra}
[\Omega,\mathrm{T}]_\la{L}(\xi_1,\dots,\xi_{p+q})&=\frac{1}{p! q!}\displaystyle\sum_{\sigma\in S(p+q)} Sign(\sigma)\Big[\Omega\big({\phi_E}^{-1}(\xi_{\sigma(1)}),\dots,{\phi_E}^{-1}(\xi_{\sigma(p)})\big),\nonumber\\&\qquad\qquad\qquad\qquad\qquad\qquad\mathrm{T}\big({\phi_E}^{-1}(\xi_{\sigma(p+1)}),\dots,{\phi_E}^{-1}(\xi_{\sigma_{(p+q)}})\big)\Big]_\la{L}
\end{align}
for $\xi_1,\dots,\xi_{p+q}\in \Gamma(X,\la{L})$. 
\section{H-Gauge group action}\label{sec-3}
In this section, we have described Hom-gauge group (H-gauge group) for a given Hom-bundle along with the moduli space of irreducible $\la{L}$-connections as H-gauge equivalence classes, which we have further studied in Section \ref{sec-4}.

For a given Hom-bundle $E$, the collection of invertible $\phi^\star$-linear global morphisms has a group structure with the group operation $\psi\bigcdot \psi_2=\psi_1\circ \phi_E^{-1}\circ \psi_2$ and identity $\phi_E$, which we will call the Hom-Gauge group, given by 
\begin{align*}
  \text{H-Gau}(E) &=\left\{ \psi\in \mathcal{A}^0_\la{L}\big(X,\text{End}_{\phi_E}(E)\big)\ \middle\vert \begin{array}{l}
    \psi\bigcdot \psi'=\psi'\bigcdot\psi=\phi_E \\
    \text{ for some }\psi'\in \mathcal{A}^0_\la{L}\big(X,\text{End}_{\phi_E}(E)\big)
  \end{array}\right\} 
\end{align*}
Note that there is a unique element $[\psi]^{-1}=\phi_E\circ \psi^{-1}\circ \phi_E\in\text{H-Gau}(E)$ for each $\psi\in \text{H-Gau}(E)$ such that 
$$
\psi\bigcdot [\psi]^{-1}=[\psi]^{-1}\bigcdot\psi=\phi_E
$$
For a given smooth Hom-bundle $(E,\phi,\phi_E)$ on a smooth manifold $X$, define a map 
\begin{align}\label{gaugeact}
\odot:\text{H-Gau}(E)\otimes A(E,\la{L})&\rightarrow A(E,\la{L})\text{ given by,}\nonumber\\
\nabla^\psi=\odot(\psi,\nabla)&=\text{id}_{\la{L}^\star}\otimes \big([\psi]^{-1}\circ\phi_E^{-1}\big) \circ\nabla\circ\big(\phi_E^{-1}\circ \psi\big)\nonumber\\
&=\text{id}_{\la{L}^\star}\otimes [\psi]^{-1} \bigcdot\nabla\bigcdot \psi
\end{align}
It is easy to observe that
\begin{enumerate}
\item for $f\in C^\infty(X);s\in \Gamma(X,E)$, 
\begin{align*}
\nabla^\psi(fs)&=\text{id}_{\la{L}^\star}\otimes \big([\psi]^{-1}\circ \phi_E^{-1}\big)\circ \nabla\big(\phi_E^{-1}\circ\psi(fs)\big)\\
&=\text{id}_{\la{L}^\star}\otimes \big([\psi]^{-1}\circ \phi_E^{-1}\big)\circ \nabla\Big(f\big(\phi_E^{-1}\circ\psi\big)(s)\Big)\\
&=\text{id}_{\la{L}^\star}\otimes \big([\psi]^{-1}\circ \phi_E^{-1}\big) \Big[d_\la{L}(f)\otimes\psi(s)+\phi^\star(f)\nabla\big(\phi_E^{-1}\circ\psi(s)\big)\Big]\\
&=d_\la{L}(f)\phi_E(s)+\phi^\star(f)\nabla^\psi(s),
\end{align*}
also $\phi^\dagger_\la{L}\otimes \phi_E\circ \nabla^\psi=\nabla^\psi\circ \phi$ for $\psi\in \text{H-Gau}(E)$ implies the map $\odot$ is well defined;
\item the map $\odot$ is a left action of $\text{H-Gau}(E)$ on $A(E,\la{L})$.
\end{enumerate}

\begin{remark}{\cite[cf.Theorem 6]{SMMRF}}\rm{
The group $\text{H-Gau}(E)$ has a Lie group as well as a Hom-Lie group structure, denoted by $\left(\text{H-Gau}(E),\bigcdot,\phi_E,\text{Ad}_{\phi_E}\right)$, because 
$$
\text{Ad}_{\phi_E}\circ \bigcdot(x,y)=\bigcdot(x,y)\qquad\big(x,y\in \text{H-Gau}(E)\big)
$$
with Lie bracket structure on the Lie algebra $\text{H-}\mathfrak{g}\text{au}(E)=\mathcal{A}^0_\la{L}\big(X,\text{End}_{\phi_E}(E)\big)$ can be described using the Equation \eqref{eq.1} 
\begin{align*}
[x,y]&=x\bigcdot y-y\bigcdot x\\
&=\text{Ad}_{\phi_E}(x\bigcdot y-y\bigcdot x)\\
&=[x,y]_{\text{H-Gau}(E)}
\end{align*}
}
\end{remark}
For a given $\la{L}$-connection, $\nabla$ the isotropy subgroup $\text{H-Gau}_\nabla(E)\subset \text{H-Gau}(E)$ is given by
$$
\text{H-Gau}_{\nabla}(E)=\{\psi\in \text{H-Gau}(E)\mid \nabla^\psi=\nabla\},
$$
\begin{definition}\rm{
An $\la{L}$-connection $\nabla$ with isotropy subgroup $\text{H-Gau}_{\nabla}(E)=\mathbb{K}^\star\bigcdo \phi_E$, is called an irreducible $\la{L}$-connection, which will be denoted by $\widehat{A}(E,\la{L})$.
}
\end{definition}
It is easy to observe 
$$
\text{H-Gau}_{\nabla^\psi}(E)=[\psi]^{-1}\bigcdot \text{H-Gau}_\nabla(E)\bigcdot \psi
$$
Hence, the space of irreducible $\la{L}$-connections $\widehat{A}(E,\la{L})$ is closed under the action of $\text{H-Gau}(E)$ and we have the quotient spaces
\begin{align*}
p:A(E,\la{L})\rightarrow &\bigslant{A(E,\la{L})}{\text{H-Gau}(E)}=B(E,\la{L})\\
&\text{and}\\
\widehat{p}:\widehat{A}(E,\la{L})\rightarrow &\bigslant{\widehat{A}(E,\la{L})}{\text{H-Gau}(E)}=\widehat{B}(E,\la{L})
\end{align*}

\begin{remark}\rm{
For a given smooth Hom-bundle $E$ and smooth Hom-Lie algebroid $\la{L}$ on a smooth manifold $X$, define the reduced H-Gauge group as the quotient space $\text{H-Gau}^r(E)=\bigslant{\text{H-Gau}(E)}{\mathbb{K}^\star  \bigcdo \phi_E}$. Because for each $\la{L}$-connection $\nabla$, we have $\mathbb{K}^\star\bigcdo \phi_E\subset \text{H-Gau}_\nabla(E)$, the action of $\text{H-Gau}(E)$ reduces to the action of $\text{H-Gau}^r(E)$ and the moduli spaces, which we are interested in are the $\text{H-Gau}^r(E)$ equivalence classes
\begin{align*}
p:A(E,\la{L})\rightarrow &\bigslant{A(E,\la{L})}{\text{H-Gau}^r(E)}=B(E,\la{L})\\
&\text{and}\\
\widehat{p}:\widehat{A}(E,\la{L})\rightarrow &\bigslant{\widehat{A}(E,\la{L})}{\text{H-Gau}^r(E)}=\widehat{B}(E,\la{L})
\end{align*}
}
\end{remark}
\begin{remark}\label{connrem}\rm{
For a given Hom-Lie algebroid connection $\nabla\in A(E,\la{L})$ and $\psi\in \text{H-Gau}(E)$, the transformed connection $\nabla^\psi$ is given by
\begin{align*}
\nabla^\psi&=\text{id}_{\la{L}^\star}\otimes ([\psi^{-1}]\circ \phi_E^{-1})\circ \nabla_E\circ \phi_E^{-1}\circ \psi
\end{align*}
Using Remark \ref{remar}, we have
\begin{align}\label{equa1}
(\phi^\dagger_\la{L})^{-1}\otimes \phi_E^{-1}\circ(\nabla^{\text{End}(E)}\psi)\circ \phi_E&=\nabla^E\circ \phi_E^{-1}\circ \psi-\text{id}_{\la{L}^\star}\otimes (\psi^{-1}\otimes \phi_E^{-1})\circ\nabla^E\nonumber \\ 
\text{id}_{\la{L}^\star}\otimes ([\psi^{-1}]\circ \phi_E^{-1})\circ (\phi^\dagger_\la{L})^{-1}\otimes \phi_E^{-1}\circ(\nabla^{\text{End}(E)}\psi)\circ \phi_E&=\nabla^\psi-\nabla^E 
\end{align}
Using Remark \ref{remar1}, and applying $\nabla^{\text{End}(E)}$ on the equality $[\psi^{-1}]\circ \phi_E^{-1}\circ\psi=\phi_E$, we get
\begin{equation}\label{equa2}
(\phi^\dagger_\la{L})^{-1}\otimes \phi_E^{-1}\circ(\nabla^{\text{End}(E)}[\psi^{-1}])\circ \psi+\text{id}_{\la{L}^\star}\otimes([\psi^{-1}]\circ \phi_E^{-1})\circ(\phi^\dagger_\la{L})^{-1}\otimes \phi_E^{-1}\circ (\nabla^{\text{End}(E)}\psi)\circ \phi_E=0
\end{equation}
From Equations \eqref{equa1} and \eqref{equa2}, we have
\begin{equation}\label{equa3}
\nabla^\psi=\nabla-(\phi^\dagger_\la{L})^{-1}\otimes \phi_E^{-1}\circ(\nabla^{\text{End}(E)}[\psi^{-1}])\circ \psi
\end{equation}
More generally, for some $\la{L}$-connection $\nabla=\nabla_0+\alpha$, $\alpha\in \mathcal{A}^1_\la{L}\big(X,\text{End}(E)\big)$, we have
\begin{align*}
\nabla^\psi&=\text{id}_{\la{L}^\star}\otimes ([\psi^{-1}]\circ \phi_E^{-1})\circ(\nabla_0+\alpha)\circ \phi_E^{-1}\circ \psi\\
&=\text{id}_{\la{L}^\star}\otimes ([\psi^{-1}]\circ \phi_E^{-1})\circ \nabla_0\circ \phi_E^{-1}\circ \psi+\text{id}_{\la{L}^\star}\otimes ([\psi^{-1}]\circ \phi_E^{-1})\circ \alpha\circ \phi_E^{-1}\circ \psi\\
&=\nabla_0+\text{id}_{\la{L}^\star}\otimes ([\psi^{-1}]\circ \phi_E^{-1})\circ (\phi^\dagger_\la{L})^{-1}\otimes \phi_E^{-1}\circ (\nabla_0^{\text{End}(E)}\psi)\circ \phi_E+\\
&\qquad\quad\text{id}_{\la{L}^\star}\otimes ([\psi^{-1}]\circ \phi_E^{-1})\circ \alpha\circ \phi_E^{-1}\circ \psi\qquad\qquad(\text{from} \eqref{equa1})\\
&=\nabla_0+\text{id}_{\la{L}^\star}\otimes [\psi^{-1}]\bigcdot (\phi^\dagger_\la{L})^{-1}\otimes \phi_E^{-1}\circ (\nabla_0^{\text{End}(E)}\psi)\circ \phi_E+\text{id}_{\la{L}^\star}\otimes [\psi^{-1}]\bigcdot \alpha\bigcdot \psi
\end{align*}
And, we can write
\begin{align*}
\nabla^\psi&=\nabla_0+\alpha^\psi\text{ such that }\\
\alpha^\psi&=\text{id}_{\la{L}^\star}\otimes [\psi^{-1}]\bigcdot (\phi^\dagger_\la{L})^{-1}\otimes \phi_E^{-1}\circ (\nabla_0^{\text{End}(E)}\psi)\circ \phi_E+\text{id}_{\la{L}^\star}\otimes [\psi^{-1}]\bigcdot \alpha\bigcdot \psi
\end{align*}
}
\end{remark}
\section{Sobolve H-Gauge group action}\label{sec-4}
In this section, we have discussed Sobolev completion of $\text{H-Gau}(E)$-moduli spaces, using Sobolev theory of function spaces. The main theorems of this section are Theorems \ref{main1} and \ref{main2}, in which we have described the Hilbert manifold structure on the moduli space $\widehat{B}(E,\la{L})_l$ and $\widehat{p}:\widehat{A}(E,\la{L})_l\rightarrow \widehat{B}(E,\la{L})_l$ has principal-$\text{H-Gau}(E)_{l+1}^r$ bundle structure.

A complex (resp. real) Hom-bundle $E$ is said to have Hom-Hermitian (resp. Hom-Euclidean) metric if there is a Hermitian (resp. Euclidean) metric $(\cdot,\cdot)_{h^E}$ on the bundle $E$ such that 
$$
\big( \phi_E(s_1),\phi_E(s_2)\big)_{h^E}=\phi^\star(s_1,s_2)_{h^E}\quad\text{ for }s_1,s_2\in \Gamma(X,E)
$$

\subsection{Sobolev completion of space of $\la{L}$-connections}
Let $E,\la{L}$ be a complex (resp. real) Hom-bundle, Hom-Lie algebroid respectively over a compact manifold $X$ with Hom-Hermitian (resp. Hom-Euclidean) metric corresponding to Hermitian (resp. Euclidean) metrices $h^E, h^\la{L}$ respectively and Riemannian metric $g$ on the manifold $X$. There is an induced Hom-metric on the Hom-bundle $\Lambda^k\la{L}\otimes E$ $(0\leq k)$ associated to the Hermitian (resp. Euclidean) metric $h^E_\la{L}$ induced by $h^E$ and $h^\la{L}$. The Riemannian metric $g$ induces a volume form $\text{vol}(g)$, which further induces Borel measure $\mu_g$ on the smooth manifold $X$. 

Let $L^2_l\big(\mathcal{A}_\la{L}^0(X,E)\big)(l\in \mathbb{Z}^+)=\mathcal{A}_\la{L}^0(X,E)_l$ be the space of equivalence classes of Borel measurable sections with weak derivatives of order upto $l$ are square integrable. The space $\mathcal{A}_\la{L}^0(X,E)_l(l\in \mathbb{Z}^+)$ is called the Sobolev completion of space of global sections of bundle $E$, which is a Hilbert space with inner product
$$
\langle s_1,s_2\rangle_{L^2_l}=\langle s_1,s_2\rangle_{l}=\displaystyle\sum_{j=0}^l \langle \nabla^js_1,\nabla^js_2\rangle_{L^2}\qquad(s_1,s_2\in \mathcal{A}_\la{L}^0(X,E)_l)
$$

where $\langle \nabla^js_1,\nabla^js_2\rangle_{L^2}$ can be computed using metric $h^E$ and $h^E_\la{L}$. The metric $h^E$ induces a metric on the bundle $\text{End}(E)\cong \phi!E\otimes Hom(E,\phi!\underline{\mathbb{K}})$. 
Using this induced metric and the given Hermitian metric on $\la{L}$, we can describe the Sobolev completion of the space $\mathcal{A}_\la{L}^k\big(X,\text{End}(E)\big)$, which will be written as $L^2_l\Big(\mathcal{A}_\la{L}^k\big(X,\text{End}(E)\big)\Big)=\mathcal{A}_\la{L}^k\big(X,\text{End}(E)\big)_l$

Using the Lemma \ref{liealgconn}, define the Sobolev space of $\la{L}$-connections as
$$
A(E,\la{L})_l=\{\nabla_0+\alpha\mid \alpha\in \mathcal{A}^1_\la{L}\big(X,\text{End}_{\phi_E}(E)\big)_l\}
$$
where $\mathcal{A}^1_\la{L}\big(X,\text{End}_{\phi_E}(E)\big)_l=\{x\in \mathcal{A}^1_\la{L}\big(X,\text{End}(E)\big)_l\mid\phi^\dagger_\la{L}\otimes\phi_E\circ x=x\circ \phi^\dagger_\la{L}\otimes\phi_E\}$ and $\phi^\dagger_\la{L}\otimes\phi_E$ is continuous extension of the smooth map $\phi^\dagger_\la{L}\otimes\phi_E:\mathcal{A}^1_\la{L}(X,E)\rightarrow \mathcal{A}^1_\la{L}(X,E)$ to appropriate Sobolev spaces, using Sobolev multiplication theorem in the range $l>\frac{1}{2}\text{dim}_\mathbb{R}(X)$. 

The space $\mathcal{A}^1_\la{L}\big(X,\text{End}_{\phi_E}(E)\big)_l\subset \mathcal{A}^1_\la{L}\big(X,\text{End}(E)\big)_l$ is a closed subspace, furthermore a Hilbert space, which implies the Sobolev space of $\la{L}$-connections is a Hilbert manifold.
\subsection{Sobolev completion of space of H-gauge transformations}
For $l>\frac{1}{2}\text{dim}_\mathbb{R}X$, using Sobolev multiplication theorem, we have the continuously extended map
\begin{align*}
\mathcal{A}^0_\la{L}\big(X,\text{End}_{\phi_E}(E)\big)_{l+1}\times \mathcal{A}^0_\la{L}\big(X,\text{End}_{\phi_E}(E)\big)_{l+1}&\rightarrow \mathcal{A}^0_\la{L}\big(X,\text{End}_{\phi_E}(E)\big)_{l+1}\\\big( (\psi_1,\psi_2)&\mapsto \psi_1\bigcdot\psi_2=\psi_1\circ \phi_E^{-1}\circ \psi_2\big)
\end{align*}
which makes the space $\mathcal{A}^0_\la{L}\big(X,\text{End}_{\phi_E}(E)\big)_{l+1}$, a Banach $\mathbb{K}$-algebra.

Define the Sobolev space of H-gauge group
\begin{equation*}
  \text{H-Gau}(E)_{l+1} = \left\{ \psi\in \mathcal{A}^0_\la{L}\big(X,\text{End}_{\phi_E}(E)\big)_{l+1}\ \middle\vert \begin{array}{l}
    \psi\bigcdot \psi'=\psi'\bigcdot \psi=\phi_E \\
    \text{ for some }\psi'\in \mathcal{A}^0_\la{L}\big(X,\text{End}_{\phi_E}(E)\big)_{l+1}
  \end{array}\right\}
\end{equation*}
The Sobolev space of H-gauge group is the subspace of invertible elements in the Hilbert space $\mathcal{A}^0_\la{L}\big(X,\text{End}_{\phi_E}(E)\big)_{l+1}$, implies $\big(\text{H-Gau}(E)\big)_{l+1}$ is a Hilbert-Lie group. The Lie bracket structure on the Lie algebra  
$$
\text{H-}\mathfrak{gau}(E)_{l+1}=\mathcal{A}^0_\la{L}\big(X,\text{End}_{\phi_E}(E)\big)_{l+1}
$$
can be described by continuously extending the map $[\cdot,\cdot]_\la{L}$ described in Equation \eqref{eq.1} to appropriate Sobolev space, using Sobolev multiplication theorem.

\subsection{Sobolev H-gauge action on the Sobolev space of $\la{L}$-connection}
The $\text{H-Gau}(E)$ action on the space $A(E,\la{L})$ described in Equation \eqref{gaugeact}, can be extended continuously to the Sobolev completion spaces
\begin{align*}
\odot:\text{H-Gau}(E)_{l+1}\times A(E,\la{L})_l&\rightarrow A(E,\la{L})_l\\
(\psi,\nabla=\nabla_0+\alpha)&\mapsto\nabla^\psi=\nabla_0+\text{id}_{\la{L}^\star}\otimes([\psi^{-1}]\circ \phi_E^{-1})\circ\big((\phi^\dagger_\la{L})^{-1}\otimes \phi_E^{-1}\big)\circ(d^{\nabla_0}\psi)\circ \phi_E+\\
&\qquad\qquad\text{id}_{\la{L}^\star}\otimes(\psi\circ \phi_E^{-1})\circ\alpha\circ\phi_E^{-1}\circ\psi^{-1}\qquad(\text{ see Equation }\eqref{connrem})
\end{align*}
where $d^{\nabla_0}:\mathcal{A}^{\bigcdot}_\la{L}\big(X,\text{End}_{\phi_E}(E)\big)_l\rightarrow\mathcal{A}^{\bigcdot +1}_\la{L}\big(X,\text{End}_{\phi_E}(E)\big)_{l-1}$ is continous extension of the degree 1 operator $d^{\nabla_0}$ assciated to the connection $\nabla^{\text{End}(E)}_0$ (see Equation \eqref{operconn1}) and $\phi_E,\psi$ are continuously extended locally linear maps on finite dimensional Sobolev space; hence the Sobolev Gauge action $\odot$ is a smooth map.

Similar to the smooth case discussed in previous section, for a Sobolev $\la{L}$-connection $\nabla$, the isotropy subgroup $\text{H-Gau}_\nabla(E)_{l+1}\subset \text{H-Gau}(E)_{l+1}$ is given by
\begin{equation}\label{sobgaueq}
\text{H-Gau}_\nabla(E)_{l+1}=\{\psi\in \text{H-Gau}(E)_{l+1}\mid \nabla^\psi=\nabla\}
\end{equation}
and a Sobolev $\la{L}$-connection is said to be an irreducible $\la{L}$-connection if $\text{H-Gau}_\nabla(E)=\mathbb{K}^\star\cdot \phi_E$. The collection of irreducible Sobolev $\la{L}$-connections will be denoted by $\widehat{A}(E,\la{L})_l\subset A(E,\la{L})_l$. Similar to the smooth case, it can be verified that,
$$
\text{H-Gau}_{\nabla^\psi}(E)_{l+1}=[\psi]^{-1}\bigcdot\text{H-Gau}(E)_{l+1}\bigcdot\psi
$$
and the space $\widehat{A}(E,\la{L})_l$ is closed under $\text{H-gauge}$ action. Define the $\text{H-gauge}$ theoretic moduli spaces
\begin{align*}
p:A(E,\la{L})_l\rightarrow &\bigslant{A(E,\la{L})_l}{\text{H-Gau}(E)_{l+1}}=B(E,\la{L})_l\\
&\text{and}\\
\widehat{p}:\widehat{A}(E,\la{L})_l\rightarrow &\bigslant{\widehat{A}(E,\la{L})_l}{\text{H-Gau}(E)_{l+1}}=\widehat{B}(E,\la{L})_l
\end{align*}
For an irreducible Sobolev $\la{L}$-connection $\nabla\in A(E,\la{L})_l$, the isotropy subgroup $\text{H-Gau}_\nabla(E)_{l+1}\subset \text{H-Gau}(E)_{l+1}$ is $\mathbb{K}^\star\cdot\phi_E$, we define the reduced Sobolev H-gauge group as,
$$
\text{Gau}^r(E)_{l+1}=\bigslant{\text{Gau}(E)_{l+1}}{\mathbb{K}^\star\cdot\phi_E}
$$
The left action of $\text{Gau}^r(E)_{l+1}$ on the space $\widehat{A}(X,\la{L})_l$ is a free action.
\begin{theorem}\cite[cf. Theorem II.2]{HGKHN}\label{toprove}
For a given Banach Lie group $G$ over a field $\mathbb{K}$ with a normal Banach Lie subgroup $N$ with Lie algebras $\mathfrak{g},\mathfrak{n}$ respectively. Then $\bigslant{G}{N}$ is a Banach Lie group with Lie algebra $\bigslant{\mathfrak{g}}{\mathfrak{n}}$,  which can be described in unique way such that the map $q:G\rightarrow \bigslant{G}{N}$ is a smooth map. Moreover, for any Banach manifold $X$ a map $f:\bigslant{G}{N}\rightarrow X$ is smooth iff $f\circ q$ is smooth.
\end{theorem}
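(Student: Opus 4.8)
The plan is to build the smooth structure on $G/N$ from a single chart around the identity coset and then transport it by left translations, which is the standard strategy for quotients of (possibly infinite-dimensional) Lie groups. First I would record the algebraic input: since $N$ is a \emph{normal} Banach Lie subgroup, its Lie algebra $\mathfrak{n}$ is a closed ideal of $\mathfrak{g}$, so the Banach quotient $\mathfrak{g}/\mathfrak{n}$ inherits a well-defined Lie bracket and is the natural candidate for the Lie algebra of $G/N$. A key hypothesis I would make explicit is that $\mathfrak{n}$ is complemented in $\mathfrak{g}$, i.e. there is a closed subspace $\mathfrak{m}$ with $\mathfrak{g}=\mathfrak{m}\oplus\mathfrak{n}$; this is exactly what guarantees that $\mathfrak{g}/\mathfrak{n}\cong\mathfrak{m}$ is again a Banach space, and it is available for Banach Lie subgroups in the sense used here.

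Next I would produce the chart. After checking that $N$ is closed in $G$ (so that the quotient topology on $G/N$ is Hausdorff), I would restrict the exponential map of $G$ to a neighborhood of $0$ on which it is a diffeomorphism onto an open set, and consider the composite $\sigma:\mathfrak{m}\supset U\to G/N$, $v\mapsto q(\exp_G v)$. Using the decomposition $\mathfrak{g}=\mathfrak{m}\oplus\mathfrak{n}$ together with the inverse function theorem in Banach spaces, I would show that $\sigma$ is a homeomorphism from a small $U$ onto an open neighborhood of the base coset $q(e)$ and that it admits a smooth local lift (section) $s:q(U)\to G$ with $q\circ s=\mathrm{id}$. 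Declaring $\sigma^{-1}$ a chart and translating it by the left $G/N$-action yields an atlas; chart compatibility reduces to smoothness of the transition maps, which lift to smooth maps on $G$ through the local sections.

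With charts and local sections in hand, smoothness of multiplication and inversion on $G/N$ follows by lifting: locally $q(g_1)q(g_2)=q(g_1g_2)$ and $q(g)^{-1}=q(g^{-1})$, so each quotient operation is, read in charts, the composition of the corresponding smooth operation on $G$ with the smooth section $s$ and the submersion $q$. The same differential $dq_e$ identifies the Lie algebra of $G/N$ with $\mathfrak{g}/\mathfrak{n}$. For the universal property, smoothness of $q$ gives one direction at once: if $f$ is smooth then $f\circ q$ is smooth. For the converse, given $f:G/N\to X$ with $f\circ q$ smooth, I would write $f=(f\circ q)\circ s$ on the image of each chart using the local section $s$, exhibiting $f$ as smooth. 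Uniqueness of the smooth structure then follows from the universal property itself: if two smooth structures on $G/N$ both make $q$ a smooth submersion, then the identity map between them is smooth in both directions because $\mathrm{id}\circ q=q$ is smooth, hence it is a diffeomorphism.

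The main obstacle I expect is the construction and smoothness of the local section $s$, i.e. proving that $\sigma$ is a local homeomorphism with smooth inverse. Everything downstream (Hausdorffness via closedness of $N$, chart compatibility, smoothness of the group operations, and the universal property) is formal once a smooth local section exists. This step is precisely where the splitting $\mathfrak{g}=\mathfrak{m}\oplus\mathfrak{n}$ and the Banach inverse function theorem are indispensable: without a complemented $\mathfrak{n}$ the quotient $\mathfrak{g}/\mathfrak{n}$ need not sit transversally to $N$ as a Banach space, and the section may fail to exist.
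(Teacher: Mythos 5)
The paper does not prove this statement at all: it is imported verbatim as a citation of Gl\"ockner--Neeb (Theorem II.2 of \cite{HGKHN}) and used as a black box to put a Lie group structure on the reduced gauge group $\text{H-Gau}^r(E)_{l+1}$. So there is no in-paper argument to compare against; what can be said is that your outline reproduces the standard proof of the cited result (essentially Bourbaki's quotient construction adapted to the Banach setting). You correctly isolate the one nontrivial ingredient: because a Banach--Lie subgroup is by definition a \emph{split} submanifold, $\mathfrak{n}$ is a complemented closed ideal, and the map $(v,w)\mapsto \exp_G(v)\exp_G(w)$ on $\mathfrak{m}\oplus\mathfrak{n}$ is a local diffeomorphism at $(0,0)$ by the Banach inverse function theorem; this yields the smooth local section of $q$, from which Hausdorffness (closedness of $N$), the atlas, smoothness of the group operations, the identification of the Lie algebra with $\mathfrak{g}/\mathfrak{n}$, the universal property, and uniqueness all follow formally, exactly as you say. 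The only point I would press you to make explicit is the injectivity of $\sigma: v\mapsto q(\exp_G v)$ on a small neighborhood: if $\exp_G(-v_1)\exp_G(v_2)\in N$ with $v_1,v_2$ small, you must use that $N$ meets a neighborhood of $e$ exactly in $\exp_G$ of a neighborhood of $0$ in $\mathfrak{n}$ (i.e.\ the submanifold property of $N$, not just closedness) to conclude $v_1=v_2$ via the local product decomposition. With that caveat your plan is a correct and complete blueprint for the cited theorem.
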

Using above theorem, a Lie group structure on the quotient space $\text{H-Gau}^r(E)$ can be described in a unique way with Lie algebra
$$
\text{H-}\mathfrak{gau}^r(E)_{l+1}=\bigslant{\mathcal{A}^0_\la{L}\big(X,\text{End}_{\phi_E}(E)\big)_{l+1}}{\mathbb{K}^\star\bigcdo \phi_E}=\mathcal{A}^0_\la{L}\big(X,\text{End}_{\phi_E}(E)\big)_{l+1}^0
$$
where the space $\mathcal{A}^0_\la{L}\big(X,\text{End}_{\phi_E}(E)\big)_{l+1}^0$ can be described using the $L^2$-orthogonal decomposition
$$
\mathcal{A}^0_\la{L}\big(X,\text{End}_{\phi_E}(E)\big)_{l+1}=\mathcal{A}^0_\la{L}\big(X,\text{End}_{\phi_E}(E)\big)_{l+1}^0\oplus\mathbb{K}^\star\bigcdo \phi_E
$$
The space $\mathcal{A}^0_\la{L}\big(X,\text{End}_{\phi_E}(E)\big)_{l+1}^0$ is given by
$$
\mathcal{A}^0_\la{L}\big(X,\text{End}_{\phi_E}(E)\big)_{l+1}^0=\left\{s\in \mathcal{A}^0_\la{L}\big(X,\text{End}_{\phi_E}(E)\big)_{l+1}\mid \int_X\text{tr}(\phi_E^\star\circ s)d\mu_g=0\right\}
$$
where $\phi_E^\star$ is adjoint operator w.r.t Hermitian metric $h^E$.
\begin{remark}\rm{
For a given $\alpha\in \mathcal{A}^1_\la{L}\big(X,\text{End}(E)\big)$, we have order 0, degree 1 differential operator
$$
\text{ad}(\alpha):\mathcal{A}_\la{L}^0\big(X,\text{End}(E)\big)\rightarrow \mathcal{A}^1_\la{L}\big(X,\text{End}(E)\big)
$$
given by $\text{ad}(\alpha)(\beta)=[\alpha,\beta]_\la{L}$.

The adjoint operator $\text{ad}(\alpha)^\star$ w.r.t the induced metric on the space $\mathcal{A}^0_\la{L}\big(X,\text{End}(E)\big)$ gives a ses-quilinear map
$$
m:\mathcal{A}^1_\la{L}\big(X,\text{End}(E)\big)\times \mathcal{A}^1_\la{L}\big(X,\text{End}(E)\big)\rightarrow \mathcal{A}^0_\la{L}\big(X,\text{End}(E)\big)\quad \big(m(\alpha,\beta)=\text{ad}(\alpha)^\star(\beta)\big)
$$
Using Sobolev multiplication theorem \big($l>\frac{1}{2}\text{dim}_\mathbb{R}(X)$\big) the map $m$ can be extended continuously to a map $m_l$,  given by
$$
m_{l}:\mathcal{A}^1_\la{L}\big(X,\text{End}(E)\big)_{l}\times \mathcal{A}^1_\la{L}\big(X,\text{End}(E)\big)_{l}\rightarrow \mathcal{A}^0_\la{L}\big(X,\text{End}(E)\big)_{l}\quad \big(m_l(\alpha,\beta)=\text{ad}(\alpha)^\star(\beta)\big)
$$
and for a fixed $\alpha\in \mathcal{A}^1_\la{L}\big(X,\text{End}(E)\big)_{l}$, we have the continuous $(\phi^\star)^{-1}$-linear map defined on appropriate Sobolev space
$$
\text{ad}(\alpha)^\star:\mathcal{A}^1_\la{L}\big(X,\text{End}(E)\big)_{l}\rightarrow \mathcal{A}^0_\la{L}\big(X,\text{End}(E)\big)_{l}
$$ 
Using Sobolev embedding theorem, we have the compact embeddings given by
$$
i_{l}:\mathcal{A}^0_\la{L}\big(X,\text{End}(E)\big)_{l}\rightarrow \mathcal{A}^0_\la{L}\big(X,\text{End}(E)\big)_{l-1}\quad(l\in \mathbb{Z})
$$
For a given Sobolev $\la{L}$-connection $\nabla=\nabla_0+\alpha$, we can write the differential operators
\begin{align*}
d^\nabla:&\mathcal{A}^0_\la{L}\big(X,\text{End}(E)\big)_{l+1}\rightarrow \mathcal{A}^1_\la{L}\big(X,\text{End}(E)\big)_{l}\quad \text{and}\\
(d^\nabla)^\star:& \mathcal{A}^1_\la{L}\big(X,\text{End}(E)\big)_{l}\rightarrow \mathcal{A}^1_\la{L}\big(X,\text{End}(E)\big)_{l-1}
\end{align*}
as, $d^\nabla=d^{\nabla_0}+\text{ad}(\alpha)\circ i_{l+1}$ and $(d^\nabla)^\star=(d^{\nabla_0})^\star+i_l\circ \text{ad}(\alpha)^\star$. The operator $(d^{\nabla_0})^\star$ is continuous extension to appropriate Sobolev spaces of the formal adjoint of $d^{\nabla_0}$ w.r.t the Hermitian metric $h^{\text{End}(E)}$.
}
\end{remark}
From now on we will assume, the given Hom-Lie algebroid $\la{L}$ is transitive, which means the anchor map $\mathfrak{a}_\la{L}:\la{L}\rightarrow \phi^!T_X$ is surjective.
\begin{lemma}
For any $\nabla\in A(E,\la{L})_l$, the composition of operators $d^\nabla$ and $(d^\nabla)^\star$
$$
(d^\nabla)^\star\circ d^\nabla:\mathcal{A}^0_\la{L}\big(X,\text{End}(E)\big)_{l+1}\rightarrow \mathcal{A}^0_\la{L}\big(X,\text{End}(E)\big)_{l+1}
$$
is a Fredholm operator, for all $l>\frac{1}{2}\text{dim}_\mathbb{R}(X)$.
\end{lemma}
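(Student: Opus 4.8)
The plan is to realize $(d^\nabla)^\star\circ d^\nabla$ as a compact perturbation of the model second order operator $\Delta_0:=(d^{\nabla_0})^\star\circ d^{\nabla_0}$ attached to the trivial connection $\nabla_0$, to prove that $\Delta_0$ is elliptic, and then to invoke the Fredholm theorem for elliptic operators on the compact manifold $X$ together with stability of the Fredholm property under compact perturbations. Writing $\nabla=\nabla_0+\alpha$ with $\alpha\in\mathcal{A}^1_\la{L}\big(X,\text{End}(E)\big)_l$, the preceding remark gives $d^\nabla=d^{\nabla_0}+\text{ad}(\alpha)\circ i_{l+1}$ and $(d^\nabla)^\star=(d^{\nabla_0})^\star+i_l\circ\text{ad}(\alpha)^\star$. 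Expanding the composition I would write $(d^\nabla)^\star\circ d^\nabla=\Delta_0+R$, where $R$ gathers the three remaining summands, each of which carries at least one factor $\text{ad}(\alpha)\circ i_{l+1}$ or $i_l\circ\text{ad}(\alpha)^\star$.

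First I would verify that $\Delta_0$ is elliptic. The principal symbol of $d^{\nabla_0}$ on $0$-forms at a nonzero covector $\xi$ is, up to the invertible Hom-twists $\phi^\dagger_\la{L}\otimes\phi_E$, the map $\eta\mapsto \mathfrak{a}_\la{L}^\star(\xi)\cdot\eta$. This is exactly where the transitivity hypothesis is decisive: surjectivity of the anchor $\mathfrak{a}_\la{L}:\la{L}\to\phi^!T_X$ is equivalent to injectivity of the dual anchor $\mathfrak{a}_\la{L}^\star$, so $\mathfrak{a}_\la{L}^\star(\xi)\neq 0$ for every $\xi\neq 0$, and multiplication by a nonzero $\la{L}$-covector is injective on $\mathcal{A}^0_\la{L}\big(X,\text{End}(E)\big)$. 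Hence the symbol $\sigma$ of $d^{\nabla_0}$ is injective, the symbol of $\Delta_0$ is $\sigma^\star\sigma$, a positive definite isomorphism, and $\Delta_0$ is elliptic. Since the Hom-structure maps $\phi^\dagger_\la{L}$ and $\phi_E$ are invertible (the Hom-bundle is assumed invertible), they merely conjugate the symbol and do not affect ellipticity. By the standard Fredholm theorem for elliptic operators on the compact manifold $X$, $\Delta_0$ is Fredholm between the relevant Sobolev completions for all $l$ in the stated range.

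Next I would show that $R$ is compact. Each summand of $R$ factors, on the left or on the right, through one of the Sobolev embeddings $i_{l+1}$ or $i_l$, which are compact by the Sobolev embedding theorem on the compact manifold $X$; precomposing or postcomposing a compact operator with the bounded operators $d^{\nabla_0}$, $(d^{\nabla_0})^\star$, $\text{ad}(\alpha)$, $\text{ad}(\alpha)^\star$ (the latter two bounded for fixed $\alpha$ by the Sobolev multiplication theorem in the range $l>\frac{1}{2}\text{dim}_\mathbb{R}(X)$) again yields a compact operator. Therefore $R$ is compact, and since the Fredholm property is stable under compact perturbations, $(d^\nabla)^\star\circ d^\nabla=\Delta_0+R$ is Fredholm, of the same index as $\Delta_0$.

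The main obstacle is the ellipticity of $\Delta_0$: this is precisely the step that uses transitivity of $\la{L}$, since without a surjective anchor the symbol of $d_\la{L}$ acquires a kernel and the model Laplacian degenerates. A secondary technical point is to carry the invertible Hom-twists $\phi^\dagger_\la{L}\otimes\phi_E$ correctly through the symbol computation and to confirm that the formal adjoints and their Sobolev extensions behave as stated; but since these twists are isomorphisms and the required estimates are the usual Sobolev multiplication and embedding estimates, they cause no essential difficulty once the elliptic model case is established.
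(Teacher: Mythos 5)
Your proposal is correct and follows essentially the same route as the paper: both decompose $(d^\nabla)^\star\circ d^\nabla$ into the model Laplacian $(d^{\nabla_0})^\star\circ d^{\nabla_0}$ plus a remainder made compact by the Sobolev embeddings $i_l$, and both derive ellipticity of the model operator from the injectivity of $\mathfrak{a}_\la{L}^\star$ on nonzero covectors, i.e.\ from transitivity of the anchor. Your additional remarks on carrying the invertible Hom-twists through the symbol computation and on the invariance of the index are consistent with, and slightly more explicit than, the paper's argument.
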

\begin{proof}
It is enough to prove $(d^{\nabla_0})^\star\circ d^{\nabla_0}$ is a Fredholm operator, because
\begin{align*}
(d^\nabla)^\star\circ d^\nabla&=(d^{\nabla_0})^\star\circ d^{\nabla_0}+i\circ \text{ad}(\alpha)^\star\circ d^{\nabla_0}+i\circ \text{ad}(\alpha)^\star\circ \text{ad}(\alpha)\circ i+(d^{\nabla_0})^\star\circ \text{ad}(\alpha)\circ i
\end{align*}
and $i\circ \text{ad}(\alpha)^\star,\text{ad}(\alpha)\circ i$ are compact implies $i\circ \text{ad}(\alpha)^\star\circ d^{\nabla_0}+i\circ \text{ad}(\alpha)^\star\circ \text{ad}(\alpha)\circ i+(d^{\nabla_0})^\star\circ \text{ad}(\alpha)\circ i$ are compact operators. And sum of a Fredholm and a compact operator is a Fredholm operator, so it is enough to prove that $(d^{\nabla_0})^\star\circ d^{\nabla_0}$ is an elliptic operator, which is true in case the Hom-Lie algebroid $\la{L}$ is transitive because the principal symbol 
\begin{align*}
\sigma_1\big((d^{\nabla_0})^\star\circ d^{\nabla_0}\big)(\xi_x)&=\sigma_1\big((d^{\nabla_0})^\star\big)(\xi_x)\circ \sigma_1(d^{\nabla_0})(\xi_x)\qquad (\xi_x\in \phi^!T_{X,x}^\star\setminus\{0\})
\end{align*} 
is an isomorphism if $\sigma_1(d^{\nabla_0})(\xi_x)(\underline{\hspace{6pt}})=\mathfrak{a}_\la{L}^\star(\xi_x)\otimes\underline{\hspace{6pt}}$ is isomorphism, which is true, if $\mathfrak{a}_{\la{L},x}^\star$ is injective or, the anchor map $\mathfrak{a}_\la{L}$ is surjective.
\end{proof}
\begin{lemma}
For any Sobolev $\la{L}$-connection $\nabla\in A(E,\la{L})$, we have the following $L^2$-orthogonal decomposition
\begin{equation}\label{orthodecom}
\mathcal{A}^1_\la{L}\big(X,\emph{End}(E)\big)_l=\emph{im}(d^\nabla)\oplus \emph{ker}\big((d^\nabla)^\star\big)
\end{equation}
for all $l>\frac{1}{2}\text{dim}_\mathbb{R}X$.
\end{lemma}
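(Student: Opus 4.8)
The plan is to run the classical Hodge-theoretic argument, taking the preceding lemma as the only analytic input: it tells us that $\Delta := (d^\nabla)^\star\circ d^\nabla$ is elliptic of order $2$ and Fredholm on the compact manifold $X$. Throughout, orthogonality and the adjoint $(d^\nabla)^\star$ are understood with respect to the $L^2$ pairing, which extends from smooth sections to the relevant Sobolev completions by density and continuity; in particular, since $X$ is compact there is no boundary term and the integration-by-parts identity $\langle d^\nabla u,v\rangle_{L^2}=\langle u,(d^\nabla)^\star v\rangle_{L^2}$ holds for Sobolev sections $u,v$.

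First I would record the elementary facts that make the right-hand side a direct, $L^2$-orthogonal sum. For $u\in\mathcal{A}^0_\la{L}\big(X,\text{End}(E)\big)_{l+1}$ and $v\in\ker\big((d^\nabla)^\star\big)$, the adjoint identity gives $\langle d^\nabla u,v\rangle_{L^2}=\langle u,(d^\nabla)^\star v\rangle_{L^2}=0$, so $\text{im}(d^\nabla)\perp\ker\big((d^\nabla)^\star\big)$ in $L^2$. Moreover, if $d^\nabla u\in\ker\big((d^\nabla)^\star\big)$ then $\|d^\nabla u\|_{L^2}^2=\langle u,(d^\nabla)^\star d^\nabla u\rangle_{L^2}=0$, so the two summands meet only in $0$. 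The same pairing yields $\ker\Delta=\ker(d^\nabla)$, because $\langle\Delta u,u\rangle_{L^2}=\|d^\nabla u\|_{L^2}^2$.

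The heart of the matter is surjectivity, i.e. that every $\omega\in\mathcal{A}^1_\la{L}\big(X,\text{End}(E)\big)_l$ lies in $\text{im}(d^\nabla)+\ker\big((d^\nabla)^\star\big)$. I would first note that $(d^\nabla)^\star\omega$ is $L^2$-orthogonal to $\ker\Delta=\ker(d^\nabla)$: for $h\in\ker(d^\nabla)$ we have $\langle(d^\nabla)^\star\omega,h\rangle_{L^2}=\langle\omega,d^\nabla h\rangle_{L^2}=0$. Since $\Delta$ is Fredholm and formally self-adjoint, it has closed range, and self-adjointness gives the $L^2$-orthogonal splitting of its target as $\ker\Delta\oplus\text{im}(\Delta)$; hence $(d^\nabla)^\star\omega\in\text{im}(\Delta)$ and there is $u$ with $\Delta u=(d^\nabla)^\star\omega$, that is $(d^\nabla)^\star\big(\omega-d^\nabla u\big)=0$. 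Setting $v:=\omega-d^\nabla u$ then gives $v\in\ker\big((d^\nabla)^\star\big)$ and $\omega=d^\nabla u+v$, which is precisely the decomposition \eqref{orthodecom}.

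The main obstacle is the bookkeeping of Sobolev orders together with the regularity of the solution $u$. Since $\omega$ lies in the order-$l$ completion, the source $(d^\nabla)^\star\omega$ lives one order lower, and to conclude $d^\nabla u\in\mathcal{A}^1_\la{L}\big(X,\text{End}(E)\big)_l$ I must place $u$ in the order-$(l+1)$ completion. This is exactly where the ellipticity of $\Delta$ established in the proof of the previous lemma is used, through elliptic regularity for the second-order elliptic operator $\Delta$ on the compact $X$, and where the standing hypothesis $l>\tfrac12\dim_{\R}X$ keeps the Sobolev multiplications and completions well behaved. Once the regularity of $u$ is secured, the orthogonality, trivial intersection, and surjectivity established above assemble into the claimed $L^2$-orthogonal direct sum.
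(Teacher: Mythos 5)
Your proof is correct, and its analytic core coincides with the paper's --- both arguments run through the Fredholm operator $\Delta=(d^\nabla)^\star\circ d^\nabla$ from the preceding lemma and, in effect, through the Green's operator obtained by inverting $\Delta$ off its kernel --- but the two finish differently. You prove surjectivity head-on: you check $(d^\nabla)^\star\omega\perp\ker\Delta$, invoke the Fredholm alternative for the formally self-adjoint elliptic $\Delta$ to get $\text{im}(\Delta)=(\ker\Delta)^{\perp}$, solve $\Delta u=(d^\nabla)^\star\omega$, and use elliptic regularity to place $u$ in the order-$(l+1)$ completion so that $d^\nabla u$ lands in order $l$. The paper instead uses the Fredholmness of $\Delta$ only to show that $\text{im}(d^\nabla)$ is a \emph{closed} subspace of $\mathcal{A}^1_\la{L}\big(X,\text{End}(E)\big)_l$ --- exhibiting it as the kernel of the continuous operator $\text{id}-d^\nabla\circ\big(\Delta|_{\ker(\Delta)^\perp}\big)^{-1}\circ(d^\nabla)^\star$ on the subspace where $(d^\nabla)^\star$ lands in $\text{im}(\Delta)$, with continuity of the inverse coming from the open mapping theorem --- and then concludes abstractly from Hilbert-space duality that $\mathcal{A}^1_\la{L}\big(X,\text{End}(E)\big)_l=\text{im}(d^\nabla)\oplus\text{im}(d^\nabla)^\perp$ together with $\text{im}(d^\nabla)^\perp=\ker\big((d^\nabla)^\star\big)$. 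Your route requires two additional standard inputs that the paper avoids, namely the identification of the cokernel of the self-adjoint $\Delta$ with $\ker\Delta$ via the $L^2$ pairing and the elliptic regularity needed to control the Sobolev order of $u$; in exchange it produces the decomposition of a given $\omega$ explicitly and supplies the small verifications ($\ker\Delta=\ker(d^\nabla)$, orthogonality of the two summands, triviality of their intersection) that the paper uses without comment. Both arguments are sound.
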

\begin{proof}
Using Fredholmness of the operator $\Delta=(d^\nabla)^\star\circ d^\nabla$, the dimension of $\text{ker}(\Delta)\subset \mathcal{A}_\la{L}^0\big(X,\text{End}(E)\big)_{l+1}$ is finite and $\text{im}(\Delta)\subset \mathcal{A}_\la{L}^0\big(X,\text{End}(E)\big)_{l-1}$ is a closed subspace. Furthermore, we have the $L^2$-orthogonal decomposition of $\mathcal{A}^0_\la{L}\big(X,\text{End}(E)\big)_{l+1}$ in closed subspaces
$$
\mathcal{A}^0_\la{L}\big(X,\text{End}(E)\big)_{l+1}=\text{ker}(\Delta)+\text{ker}(\Delta)^\perp
$$
and using the fact that $\text{im}(\Delta)\subset \mathcal{A}_\la{L}^0\big(X,\text{End}(E)\big)_{l-1}$ is a closed subspace, the bijective map
$$
{\Delta}_{\mkern 1mu \vrule height 2ex\mkern2mu \text{ker}(\Delta)^\perp}:\text{ker}(\Delta)^\perp\rightarrow \text{im}(\Delta)
$$
is a continuous map. Using Banach open mapping theorem $\big({\Delta}_{\mkern 1mu \vrule height 2ex\mkern2mu \text{ker}(\Delta)^\perp}\big)^{-1}$ is a continuous map, furthermore $\text{id}_Y-d^\nabla\circ \big({\Delta}_{\mkern 1mu \vrule height 2ex\mkern2mu \text{ker}(\Delta)^\perp}\big)^{-1}\circ (d^\nabla)^\star$ is a continuous map on $Y=\big((d^\nabla)^\star\big)^{-1}\big(\text{im}(\Delta)\big)$. Note that $\text{ker}(\Delta)=\text{ker}(d^\nabla)$ implies $\text{im}(d^\nabla)= \text{im}(d^\nabla)_{\mkern 1mu \vrule height 2ex\mkern2mu \text{ker}(\Delta)^\perp}$ and 
$$
\text{im}(d^\nabla)=\text{ker}\big(\text{id}_Y-d^\nabla\circ \big({\Delta}_{\mkern 1mu \vrule height 2ex\mkern2mu \text{ker}(\Delta)^\perp}\big)^{-1}\circ (d^\nabla)^\star\big)
$$
is a closed subspace of $\mathcal{A}^1_\la{L}\big(X,\text{End}(E)\big)_l$.

Furthermore, we have the $L^2$-orthogonal decomposition 
$$
\mathcal{A}^1_\la{L}\big(X,\text{End}(E)\big)_l=\text{im}(d^\nabla)\oplus \text{im}(d^\nabla)^\perp
$$
Using the property of adjoint operator, we have $\text{im}(d^\nabla)^\perp=\text{ker}(d^\nabla)^\star$ and we have the required $L^2$-orthogonal decomposition.
\end{proof}
\begin{lemma}\label{remark}
Let $\nabla$ be an $\la{L}$-connection for a Hom-bundle $E$ over a compact manifold $X$. The following statements are equivalent
\begin{enumerate}
\item $\emph{H-Gau}_\nabla(E)=\mathbb{K}^\star\bigcdo \phi_E$
\item $\emph{ker}(\nabla^{\emph{End}(E)})=\mathbb{K}\bigcdo \phi_E$
\end{enumerate}
\end{lemma}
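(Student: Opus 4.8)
The plan is to reduce the statement to an algebraic fact about the \emph{stabilizer algebra} $\mathcal{K}:=\ker\big(\nabla^{\text{End}(E)}\big)\subseteq \mathcal{A}^0_\la{L}\big(X,\text{End}_{\phi_E}(E)\big)$, after first recording a dictionary between the two sides. Setting $T=\phi_E$ in Remark \ref{remar} gives $\nabla^{\text{End}(E)}\phi_E=[\nabla^E,\phi_E]_\la{L}=0$, so $\mathbb{K}\bigcdo\phi_E\subseteq\mathcal{K}$ holds automatically and the entire content of $(2)$ is the reverse inclusion $\mathcal{K}\subseteq\mathbb{K}\bigcdo\phi_E$. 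For the bridge to $(1)$, Equation \eqref{equa3} reads $\nabla^\psi=\nabla-(\phi^\dagger_\la{L})^{-1}\otimes \phi_E^{-1}\circ\big(\nabla^{\text{End}(E)}[\psi^{-1}]\big)\circ \psi$; since $(\phi^\dagger_\la{L})^{-1}\otimes\phi_E^{-1}$ and $\psi$ are invertible, $\nabla^\psi=\nabla$ holds iff $\nabla^{\text{End}(E)}[\psi^{-1}]=0$, that is
\[
\psi\in\text{H-Gau}_\nabla(E)\iff [\psi]^{-1}\in\mathcal{K}.
\]
By Remark \ref{remar1}, $\mathcal{K}$ is closed under the product $\bigcdot$, hence a unital associative $\mathbb{K}$-algebra with unit $\phi_E$; moreover an element of $\mathcal{K}$ that is invertible in the gauge group has its inverse again in $\mathcal{K}$ (the inverse is a polynomial in the element by Cayley--Hamilton). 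Consequently $\text{H-Gau}_\nabla(E)$ is exactly the group of $\bigcdot$-units of $\mathcal{K}$.

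With this dictionary the implication $(2)\Rightarrow(1)$ is immediate: if $\mathcal{K}=\mathbb{K}\bigcdo\phi_E$ then its units are precisely $\mathbb{K}^\star\bigcdo\phi_E$, whence $\text{H-Gau}_\nabla(E)=\mathbb{K}^\star\bigcdo\phi_E$.

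For $(1)\Rightarrow(2)$ I argue by contraposition, producing a non-scalar element of $\text{H-Gau}_\nabla(E)$ from any $a\in\mathcal{K}\setminus\mathbb{K}\bigcdo\phi_E$. Here transitivity of $\la{L}$ is indispensable: since $\mathfrak{a}_\la{L}$ is surjective, a flat section is covariantly constant in every direction of $T_X$, so on the connected manifold $X$ it is determined by its value at a single point $x_0$. The associated honest endomorphism $a\circ\phi_E^{-1}$ is $C^\infty(X)$-linear, hence fibrewise, and $D$-parallel for the induced ordinary connection $D$ on $\text{End}(E)$; thus evaluation $a\mapsto (a\circ\phi_E^{-1})_{x_0}$ embeds $\mathcal{K}$ as a unital subalgebra of $\mathrm{End}(E_{x_0})\cong M_n(\mathbb{K})$, and a flat endomorphism is a $\bigcdot$-unit iff its value at $x_0$ is invertible. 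Now $M:=(a\circ\phi_E^{-1})_{x_0}$ is a non-scalar matrix, so its spectrum is a finite set and $M-\lambda I$ is invertible for all but finitely many $\lambda\in\mathbb{K}$ (valid for $\mathbb{K}=\R$ and $\mathbb{K}=\C$ alike). Choosing such a $\lambda$, the element $a-\lambda\bigcdo\phi_E\in\mathcal{K}$ is a non-scalar $\bigcdot$-unit, and its gauge inverse lies in $\text{H-Gau}_\nabla(E)\setminus(\mathbb{K}^\star\bigcdo\phi_E)$, contradicting $(1)$. Hence no such $a$ exists and $\mathcal{K}=\mathbb{K}\bigcdo\phi_E$.

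The main obstacle is the geometric rigidity underlying the embedding $\mathcal{K}\hookrightarrow M_n(\mathbb{K})$: one must justify that $\nabla^{\text{End}(E)}a=0$ translates into genuine $D$-parallelism of $a\circ\phi_E^{-1}$, so that vanishing or invertibility at $x_0$ propagates to all of $X$. This is precisely where transitivity of $\la{L}$ and connectedness of $X$ enter, and without a surjective anchor the flat sections need not be rigid, so the stabilizer could exceed the algebra of parallel endomorphisms. Everything after this reduction — closure of $\mathcal{K}$ under $\bigcdot$, the Cayley--Hamilton remark, and the nonempty-spectrum argument — is elementary finite-dimensional linear algebra together with the bookkeeping of the gauge product.
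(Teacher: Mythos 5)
Your reduction to the stabilizer algebra $\mathcal{K}=\ker\big(\nabla^{\text{End}(E)}\big)$, the dictionary via Equation \eqref{equa3}, the trivial inclusion $\mathbb{K}\bigcdo\phi_E\subseteq\mathcal{K}$, and the direction $(2)\Rightarrow(1)$ all agree with the paper. The gap is in $(1)\Rightarrow(2)$. To make $a-\lambda\bigcdo\phi_E$ a $\bigcdot$-unit you need $a\circ\phi_E^{-1}-\lambda\,\mathrm{id}$ invertible at \emph{every} point of $X$, and you derive this from finiteness of the spectrum at a \emph{single} point $x_0$ by asserting that $\nabla^{\text{End}(E)}a=0$ forces genuine $D$-parallelism of $a\circ\phi_E^{-1}$ for an ordinary connection $D$, so that the fibrewise conjugacy class (hence spectrum) is constant. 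You correctly flag this as the crux, but you never prove it, and in the Hom-twisted setting it is not routine: $\nabla^{\text{End}(E)}$ obeys a $\phi^\star$-twisted Leibniz rule (Remark \ref{remar}), so ``covariantly constant along paths'' does not immediately apply, and any untwisting intertwines the fibre over $x$ with the fibre over $\phi(x)$, which can move the spectrum. You also invoke connectedness of $X$, which is not a hypothesis of the lemma. Without the rigidity step, the union over $x\in X$ of the spectra of $(a\circ\phi_E^{-1})_x$ could be uncountable and your choice of $\lambda$ is not justified. A secondary, fixable flaw: your Cayley--Hamilton justification that the gauge inverse of a flat unit is flat uses a polynomial whose coefficients are \emph{functions} on $X$, and $\mathcal{K}$ is only a $\mathbb{K}$-vector space, not a $C^\infty(X)$-module; the correct (and easier) argument is to differentiate the identity $[\psi^{-1}]\circ\phi_E^{-1}\circ\psi=\phi_E$ as in Equation \eqref{equa2}, which shows $\nabla^{\text{End}(E)}\psi=0$ iff $\nabla^{\text{End}(E)}[\psi^{-1}]=0$ with no spectral input.

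The paper's own proof shows the heavy step is unnecessary: given a nonzero $\psi\in\mathcal{K}$, it considers $\phi_E\circ\bigl(\text{id}_E+\tfrac{1}{c}\,\phi_E^{-1}\circ\psi\bigr)=\phi_E+\tfrac{1}{c}\psi$, which lies in $\mathcal{K}$ by linearity and lies in $\text{H-Gau}(E)$ for $|c|$ large because compactness of $X$ bounds $\phi_E^{-1}\circ\psi$ uniformly, making the perturbation of $\text{id}_E$ invertible in every fibre at once. Hypothesis $(1)$ then forces $\phi_E+\tfrac{1}{c}\psi\in\mathbb{K}^\star\bigcdo\phi_E$, hence $\psi\in\mathbb{K}\bigcdo\phi_E$. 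This uses only compactness --- no transitivity, no connectedness, no parallel transport. If you want to salvage your route, you must actually construct the ordinary connection on $\text{End}(E)$ whose parallel sections are exactly $\mathcal{K}$ (using invertibility of $\phi$ and $\phi_E$ and surjectivity of $\mathfrak{a}_\la{L}$) and prove the constancy of the spectrum; as written, that step is a genuine missing idea, and the shift-by-$\lambda$ argument fails without it.
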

\begin{proof}
The proof for $(1)\implies (2)$ can be proved as follows.

Let $\psi\in \mathbb{K}^\star\bigcdo \phi_E$, we have $\nabla^\psi=\nabla$. Form the Equation \eqref{equa3}, we have $\psi\in \text{ker}(\nabla^{\text{End}(E)})$, which implies
\begin{equation}\label{eq}
\mathbb{K}\bigcdo \phi_E\subset \text{ker}(\nabla^{\text{End}(E)})
\end{equation}
Now let $\psi\in \text{ker}(\nabla^{\text{End}(E)})$ be a non-trivial element. Using compactness of manifold $X$, we can have $c\in \mathbb{K}$ with $|c|$ large enough such that $\phi_E\circ \Big(\text{id}_E+\bigslant{(\phi_E^{-1}\circ \psi)}{c}\Big)\in \text{H-Gau}(E)$. Also $\nabla^{\text{End}(E)}\biggl(\phi_E\circ \Big(\text{id}_E+\bigslant{(\phi_E^{-1}\circ \psi)}{c}\Big)\biggr)=0$ and using the Equation \eqref{equa3}, we have $\phi_E\circ \Big(\text{id}_E+\bigslant{(\phi_E^{-1}\circ \psi)}{c}\Big)\in \text{H-Gau}_\nabla(E)$. Assuming $(1)$ is true, we have $\psi\in \mathbb{K}\bigcdo \phi_E$ or,
\begin{equation}\label{eq2}
\text{ker}(\nabla^{\text{End}(E)})\subset \mathbb{K}\bigcdo \phi_E
\end{equation}
From Equations \eqref{eq} and \eqref{eq2}, we have $\text{ker}(\nabla^{\text{End}(E)})= \mathbb{K}\bigcdo \phi_E$.

To prove $(2)\implies (1)$, we need to show only $\text{H-Gau}(E)\subset K^\star\bigcdo \phi_E$ and can be proved using the Equation \eqref{equa3}.
\end{proof}
\begin{remark}\label{remark1}\rm{
The above theorem can be proved in the same line of arguements for Sobolev space of $\la{L}$-connections for Hom-bundle $E$ on compact manifold $X$ and we have the following lemma.
}
\end{remark}
\begin{lemma}\label{rema}
Let $\nabla$ be a Sobolev $\la{L}$-connection for a Hom-bundle $E$ over a compact manifold $X$. The following statements are equivalent
\begin{enumerate}
\item $\emph{H-Gau}_\nabla(E)_{l+1}=\mathbb{K}^\star\bigcdo \phi_E$
\item $\emph{ker}(\nabla^{\emph{End}(E)})=\mathbb{K}\bigcdo \phi_E$
\end{enumerate}
where $\phi_E$ is continuous extension of the map $\phi_E:\Gamma(X,E)\rightarrow \Gamma(X,E)$ to appropriate Sobolev spaces, using Sobolev multiplication theorem in the range $l>\frac{1}{2}\text{dim}_\mathbb{R}X$.
\end{lemma}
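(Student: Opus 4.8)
The plan is to mirror the proof of Lemma \ref{remark}, transporting each step to the Banach-algebra setting $\mathcal{A}^0_\la{L}\big(X,\text{End}_{\phi_E}(E)\big)_{l+1}$ and checking that the tools invoked in the smooth case survive the Sobolev completion. The single identity that drives everything is Equation \eqref{equa3}, namely $\nabla^\psi=\nabla-(\phi^\dagger_\la{L})^{-1}\otimes \phi_E^{-1}\circ(\nabla^{\text{End}(E)}[\psi^{-1}])\circ \psi$; since all the operators appearing in it are the continuous extensions built in the previous subsections (using the Sobolev multiplication theorem in the range $l>\frac{1}{2}\text{dim}_\mathbb{R}X$), this relation continues to hold verbatim for $\psi\in \text{H-Gau}(E)_{l+1}$. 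In particular $\nabla^{\text{End}(E)}$ is a continuous $\mathbb{K}$-linear operator, so its kernel is a closed $\mathbb{K}$-linear subspace, a fact I will use repeatedly.

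For $(1)\Rightarrow(2)$ I would argue exactly as in the smooth case. If $\psi\in\mathbb{K}^\star\bigcdo\phi_E$ then $\nabla^\psi=\nabla$ by hypothesis $(1)$, so Equation \eqref{equa3} forces $\nabla^{\text{End}(E)}[\psi^{-1}]=0$, giving the inclusion $\mathbb{K}\bigcdo\phi_E\subset\text{ker}(\nabla^{\text{End}(E)})$. For the reverse inclusion, take a nontrivial $\psi\in\text{ker}(\nabla^{\text{End}(E)})$ and form $\theta_c=\phi_E\circ\big(\text{id}_E+(\phi_E^{-1}\circ\psi)/c\big)=\phi_E+\psi/c$. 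Because $\psi$ has finite Sobolev norm, for $|c|$ large enough $\theta_c$ differs from the unit $\phi_E$ of the Banach algebra by an element of arbitrarily small norm, hence is $\bigcdot$-invertible by a Neumann-series argument; thus $\theta_c\in\text{H-Gau}(E)_{l+1}$. Since $\text{ker}(\nabla^{\text{End}(E)})$ is a $\mathbb{K}$-linear subspace containing both $\phi_E$ and $\psi$, it contains $\theta_c$, and Equation \eqref{equa3} then gives $\nabla^{\theta_c}=\nabla$, i.e. $\theta_c\in\text{H-Gau}_\nabla(E)_{l+1}$. Hypothesis $(1)$ now places $\theta_c$ in $\mathbb{K}^\star\bigcdo\phi_E$, which forces $\psi\in\mathbb{K}\bigcdo\phi_E$ and yields $\text{ker}(\nabla^{\text{End}(E)})\subset\mathbb{K}\bigcdo\phi_E$.

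For $(2)\Rightarrow(1)$ the inclusion $\mathbb{K}^\star\bigcdo\phi_E\subset\text{H-Gau}_\nabla(E)_{l+1}$ is immediate, so it suffices to show the reverse containment: if $\nabla^\psi=\nabla$, then Equation \eqref{equa3} gives $(\phi^\dagger_\la{L})^{-1}\otimes \phi_E^{-1}\circ(\nabla^{\text{End}(E)}[\psi^{-1}])\circ \psi=0$, and cancelling the invertible factors $\psi$ and $(\phi^\dagger_\la{L})^{-1}\otimes \phi_E^{-1}$ leaves $\nabla^{\text{End}(E)}[\psi^{-1}]=0$. Thus $[\psi^{-1}]\in\text{ker}(\nabla^{\text{End}(E)})=\mathbb{K}\bigcdo\phi_E$ by $(2)$, whence $\psi\in\mathbb{K}^\star\bigcdo\phi_E$.

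The main obstacle is the invertibility step in $(1)\Rightarrow(2)$, where the pointwise ``large $c$'' argument of the smooth proof — which relied on compactness of $X$ to bound the continuous section $\psi$ uniformly — must be recast at the level of the Banach algebra $\mathcal{A}^0_\la{L}\big(X,\text{End}_{\phi_E}(E)\big)_{l+1}$. The clean way to handle this is to conjugate the $\bigcdot$-product (with unit $\phi_E$) to ordinary composition (with unit $\text{id}_E$) via the algebra isomorphism $\eta\mapsto\phi_E^{-1}\circ\eta$, under which $\theta_c$ becomes $\text{id}_E+(\phi_E^{-1}\circ\psi)/c$; submultiplicativity of the Sobolev algebra norm then makes the corresponding Neumann series converge for $|c|$ large, and compactness of $X$ re-enters only through the Sobolev multiplication and embedding theorems that underlie the Banach algebra structure in the first place.
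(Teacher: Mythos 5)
Your proposal is correct and follows essentially the same route as the paper, which gives no separate proof of Lemma \ref{rema} but only states (Remark \ref{remark1}) that the smooth argument of Lemma \ref{remark} carries over to the Sobolev setting. Your Neumann-series argument in the Banach algebra $\mathcal{A}^0_\la{L}\big(X,\text{End}_{\phi_E}(E)\big)_{l+1}$ is exactly the right replacement for the pointwise ``large $|c|$'' step, so you have in fact supplied the one detail the paper leaves implicit.
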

\begin{lemma}\label{openlconn}
The space of irreducible Sobolev $\la{L}$-connections $\widehat{A}(E,\la{L})_l\subset A(E,\la{L})_l$ is an open subspace for all $l>\frac{1}{2}\text{dim}_\mathbb{R}(X)$.
\end{lemma}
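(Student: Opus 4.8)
The plan is to reduce irreducibility to a minimality condition on the kernel of a single operator, and then exploit upper semicontinuity of the kernel dimension along the continuous family $\nabla \mapsto d^\nabla$. By Lemma \ref{rema}, a Sobolev connection $\nabla = \nabla_0 + \alpha$ is irreducible precisely when $\ker(\nabla^{\text{End}(E)}) = \mathbb{K}\bigcdo \phi_E$, where $\nabla^{\text{End}(E)} = d^\nabla$ is regarded as the operator
$$
d^\nabla : \mathcal{A}^0_\la{L}\big(X,\text{End}(E)\big)_{l+1} \to \mathcal{A}^1_\la{L}\big(X,\text{End}(E)\big)_l .
$$
Since the inclusion $\mathbb{K}\bigcdo\phi_E \subseteq \ker(d^\nabla)$ holds for \emph{every} $\nabla$ (this is exactly the derivation of \eqref{eq}, which only uses $\nabla^\psi=\nabla$ for $\psi\in\mathbb{K}^\star\bigcdo\phi_E$), irreducibility is equivalent to the minimality statement $\dim_\mathbb{K}\ker(d^\nabla) = 1$. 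It therefore suffices to prove that the set of connections attaining this minimal kernel dimension is open.

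First I would record that $\nabla \mapsto d^\nabla$ is a continuous (indeed affine) map into bounded operators. Writing $d^\nabla = d^{\nabla_0} + \text{ad}(\alpha)\circ i_{l+1}$ as in the remark above, the assignment $\alpha \mapsto \text{ad}(\alpha)$ is bounded linear by the Sobolev multiplication theorem, so $d^\nabla$ depends continuously on $\alpha \in \mathcal{A}^1_\la{L}(X,\text{End}(E))_l$. Next I would fix an irreducible $\nabla_0$ and use that $d^{\nabla_0}$ has closed image — this is the content of the orthogonal decomposition \eqref{orthodecom} — together with $\ker(d^{\nabla_0}) = \mathbb{K}\bigcdo\phi_E$. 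Setting $W = (\mathbb{K}\bigcdo\phi_E)^\perp$ for the $L^2_{l+1}$-orthogonal complement, the restriction $d^{\nabla_0}|_W : W \to \mathcal{A}^1_\la{L}(X,\text{End}(E))_l$ is injective with closed range, hence a continuous bijection onto a Banach space; by the bounded inverse theorem it is bounded below, so there is $c>0$ with $\|d^{\nabla_0} w\|_l \geq c\,\|w\|_{l+1}$ for all $w \in W$.

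The decisive step is the perturbation estimate. For $\nabla = \nabla_0 + \alpha$ and $w \in W$,
$$
\|d^\nabla w - d^{\nabla_0} w\|_l = \|\text{ad}(\alpha)(i_{l+1} w)\|_l \leq C\,\|\alpha\|_l\,\|w\|_{l+1},
$$
so whenever $\|\alpha\|_l < c/(2C)$ the restriction $d^\nabla|_W$ is still bounded below, in particular injective. Consequently the $L^2_{l+1}$-orthogonal projection onto $\mathbb{K}\bigcdo\phi_E$, restricted to $\ker(d^\nabla)$, is injective: if $x \in \ker(d^\nabla)$ projects to $0$ then $x \in W \cap \ker(d^\nabla) = \{0\}$. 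This forces $\dim_\mathbb{K}\ker(d^\nabla) \leq 1$, and combined with the universal inclusion $\mathbb{K}\bigcdo\phi_E \subseteq \ker(d^\nabla)$ it yields $\ker(d^\nabla) = \mathbb{K}\bigcdo\phi_E$. By Lemma \ref{rema} every such $\nabla$ is irreducible, so the ball $\{\nabla_0 + \alpha : \|\alpha\|_l < c/(2C)\}$ is contained in $\widehat{A}(E,\la{L})_l$, which proves openness.

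I expect the main obstacle to be the bounded-below property of $d^{\nabla_0}|_W$: this is the single point where both the closed-range statement \eqref{orthodecom} (which rests on Fredholmness, hence on transitivity of $\la{L}$) and the irreducibility hypothesis are genuinely used, and it is exactly the property that persists under small perturbations and prevents the kernel from growing. The remaining ingredients — continuity of $\nabla\mapsto d^\nabla$ and the Sobolev multiplication estimate — are routine, and notably no cokernel or index information is required, since only the upper semicontinuity of the kernel dimension enters.
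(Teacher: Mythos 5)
Your proof is correct, and it reaches the same structural reduction as the paper: via Lemma \ref{rema}, irreducibility of $\nabla$ amounts to $\dim_{\mathbb{K}}\ker(d^\nabla)=1$, and since $\mathbb{K}\bigcdo\phi_E\subseteq\ker(d^\nabla)$ holds universally (the derivation of \eqref{eq}), openness follows from upper semicontinuity of the kernel dimension along the affine family $\alpha\mapsto d^{\nabla_0+\alpha}$. Where you diverge is in how that semicontinuity is established. The paper passes to the Laplacian $\Delta_\alpha=(d^\nabla)^\star\circ d^\nabla$, notes $\ker(\Delta_\alpha)=\ker(d^\nabla)$, and cites the upper semicontinuity of $\dim\ker$ for continuous families of Fredholm operators \cite{JCADSD}. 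You work with $d^\nabla$ directly and prove the needed semicontinuity by hand: the closed range of $d^\nabla$ at the irreducible base point (the content of \eqref{orthodecom}) together with injectivity on $W=(\mathbb{K}\bigcdo\phi_E)^\perp$ yields a lower bound $\|d^\nabla w\|_{l}\geq c\|w\|_{l+1}$ on $W$ via the bounded inverse theorem, and the Sobolev multiplication estimate shows this bound persists for $\|\alpha\|_{l}<c/(2C)$, so the kernel cannot grow. Your route buys a self-contained, quantitative argument (an explicit radius for the open ball) and makes transparent that no index or cokernel information is used; the paper's route buys brevity at the cost of an external citation. Both ultimately rest on the same inputs --- Lemma \ref{rema}, the universal inclusion \eqref{eq}, and the closed-range/Fredholm property of the relevant operator, which is where transitivity of $\la{L}$ enters --- so neither is more general than the other. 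One cosmetic caution: you reuse the symbol $\nabla_0$ both for the paper's fixed reference connection and for your irreducible base point; since the identity $d^{\nabla_0+\alpha}=d^{\nabla_0}+\mathrm{ad}(\alpha)\circ i_{l+1}$ holds with any connection as base point, the argument is unaffected, but the notation should be disambiguated.
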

\begin{proof}
For a given Sobolev $\la{L}$-connection $\nabla=\nabla_0+\alpha$, we have a Fredholm operator $\Delta_\alpha$, with finite dimensional kernel space $\text{ker}(\Delta_\alpha)$; furthermore the composition of maps
$$
A(E,\la{L})_l\xrightarrow{\nabla\mapsto \Delta_\alpha}\mathcal{F}(\mathcal{A}^0_{\la{L},l+1},\mathcal{A}^0_{\la{L},l-1})\xrightarrow{\Delta_\alpha\mapsto \text{dim}(\text{ker}\Delta_\alpha)}\mathbb{R}
$$
is upper semi-continuous \cite{JCADSD}. For an irreducible Sobolev $\la{L}$-connection $\nabla=\nabla_0+\alpha$, $\text{dim}\big(\text{ker}(\Delta_\alpha)\big)=1$ (see Lemma \ref{rema}) also we know $\text{ker}(\Delta_\alpha)=\text{ker}(d_\alpha)$ implies the space of irreducible Sobolev $\la{L}$-connections $\widehat{A}(E,\la{L})_l\subset A(E,\la{L})_l$ is an open subspace, using semi-continuity. 
\end{proof}
Since space $\widehat{A}(E,\la{L})_l$ (resp. $A(E,\la{L})_l$) has image $\widehat{B}(E,\la{L})_l$ (resp. $B(E,\la{L})_l$) under left $\text{H-Gau}(E)_{l+1}$-action and using the Lemma \ref{openlconn}, the space $\widehat{B}(E,\la{L})_l\subset B(E,\la{L})_l$, is an open subspace under quotient toplogy. 

For $\nabla\in A(E,\la{L})_l$ and $\epsilon>0$, the Hilbert submanifold $\mathcal{O}_{\nabla,\epsilon}\subset A(E,\la{L})_l$
$$
\mathcal{O}_{\nabla,\epsilon}=\{\nabla+\alpha\mid \alpha\in \mathcal{A}^1_\la{L}\big(X,\text{End}_{\phi_E}(E)\big)_l, (d^\nabla)^\star(\alpha)=0\text{ and }\|\alpha\|_{L^2_l}<\epsilon\}
$$
has the tangent space 
$$
T_{\mathcal{O}_{\nabla,\epsilon}}=\text{Ker}\big((d^\nabla)^\star\big)\subset\mathcal{A}^1_\la{L}\big(X,\text{End}_{\phi_E}(E)\big)
$$
\subsection{Main theorem}
Considering a Hilbert submanifold $\mathcal{O}_{\nabla,\epsilon}\subset \widehat{A}(E,\la{L})_l\subset A(E,\la{L})_l$ for $\epsilon$ small enough, 
\begin{theorem}
The map
$$
\Psi_\nabla:\text{H-Gau}(E)^r_{l+1}\times \mathcal{O}_{\nabla,\epsilon}\rightarrow \widehat{A}(E,\la{L})_l
$$
given by
$$
\Psi_\nabla(\psi,\nabla+\alpha)=\psi\odot (\nabla+\alpha) 
$$
is smooth and a local diffeomorphism.
\end{theorem}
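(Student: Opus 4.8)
The plan is to reduce everything to the inverse function theorem for Hilbert manifolds: once $\Psi_\nabla$ is known to be smooth and its differential is a linear isomorphism at every point of the domain, it is automatically a local diffeomorphism. Smoothness is inherited from the smoothness of the Sobolev action $\odot$ established above. Since $\mathbb{K}^\star\bigcdo\phi_E$ lies in the isotropy of every $\la{L}$-connection, this action factors through $\text{H-Gau}(E)^r_{l+1}$, and the universal property in Theorem \ref{toprove} (a map out of the quotient is smooth iff its precomposition with the quotient map is smooth) shows the factored action, and hence its restriction $\Psi_\nabla$ to $\text{H-Gau}(E)^r_{l+1}\times\mathcal{O}_{\nabla,\epsilon}$, is smooth; here $\mathcal{O}_{\nabla,\epsilon}$ is an affine Hilbert submanifold of $A(E,\la{L})_l$.

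First I would compute the differential at the base point $([\phi_E],\nabla)$. The tangent space of the first factor at the identity is the reduced Lie algebra $\mathcal{A}^0_\la{L}\big(X,\text{End}_{\phi_E}(E)\big)^0_{l+1}$, and that of the slice at $\nabla$ is $\text{Ker}\big((d^\nabla)^\star\big)$. Differentiating the expression \eqref{equa3} for $\nabla^\psi$ at $\psi=\phi_E$, the infinitesimal gauge action is $\chi\mapsto d^\nabla\chi$, so
\begin{equation*}
d\Psi_\nabla\big|_{([\phi_E],\nabla)}(\chi,\beta)=d^\nabla\chi+\beta .
\end{equation*}
The target tangent space is $\mathcal{A}^1_\la{L}\big(X,\text{End}_{\phi_E}(E)\big)_l$, which by the decomposition \eqref{orthodecom} equals $\text{im}(d^\nabla)\oplus\text{Ker}\big((d^\nabla)^\star\big)$ (this splitting restricts to the $\phi_E$-compatible subspace since $d^\nabla$ commutes with $\phi^\dagger_\la{L}\otimes\phi_E$ by \eqref{operconn2}). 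The slice term $\beta$ maps isomorphically onto the second summand. For the first, irreducibility of $\nabla$ together with Lemma \ref{rema} gives $\text{ker}(\nabla^{\text{End}(E)})=\mathbb{K}\bigcdo\phi_E$, so $d^\nabla$ is injective on the trace-free space $\mathcal{A}^0_\la{L}\big(X,\text{End}_{\phi_E}(E)\big)^0_{l+1}$ and carries it isomorphically onto the closed subspace $\text{im}(d^\nabla)$. Hence the differential at $([\phi_E],\nabla)$ is a Hilbert-space isomorphism.

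Next I would propagate this by equivariance. Associativity of $\odot$ gives $\Psi_\nabla(\psi_0\bigcdot\psi',\nabla+\alpha)=\psi_0\odot\Psi_\nabla(\psi',\nabla+\alpha)$, so $\Psi_\nabla$ intertwines left translation by $\psi_0$ on $\text{H-Gau}(E)^r_{l+1}$ with the (diffeomorphic) action of the fixed element $\psi_0$ on $\widehat{A}(E,\la{L})_l$. Thus being a local diffeomorphism at $(\psi_0,\nabla+\alpha)$ is equivalent to being one at $([\phi_E],\nabla+\alpha)$, and it suffices to treat base points $([\phi_E],\nabla+\alpha)$ with $\alpha\in\text{Ker}\big((d^\nabla)^\star\big)$ and $\|\alpha\|_{L^2_l}<\epsilon$. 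There the differential is $(\chi,\beta)\mapsto d^{\nabla+\alpha}\chi+\beta=d^\nabla\chi+\text{ad}(\alpha)(\chi)+\beta$, which differs from the isomorphism at $\alpha=0$ only by the term $\text{ad}(\alpha)$ applied to the Lie-algebra component.

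The main obstacle is controlling this perturbation. I would estimate $\|\text{ad}(\alpha)(\chi)\|_{L^2_l}\le C\,\|\alpha\|_{L^2_l}\,\|\chi\|_{L^2_{l+1}}$ by the Sobolev multiplication theorem (valid for $l>\tfrac12\text{dim}_\R X$); since the set of invertible operators is open, for $\epsilon$ small enough the differential stays inside it and remains an isomorphism for all $\|\alpha\|_{L^2_l}<\epsilon$. This estimate is exactly what forces the hypothesis that $\epsilon$ be small. With the differential an isomorphism at every point of the domain and $\Psi_\nabla$ smooth, the inverse function theorem for Hilbert manifolds finishes the proof.
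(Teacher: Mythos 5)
Your proposal is correct, and at its core it follows the same route as the paper: compute the differential of $\Psi_\nabla$ at the base point $(\phi_E,\nabla)$, identify it as $(\chi,\beta)\mapsto d^\nabla\chi+\beta$, get surjectivity from the $L^2$-orthogonal decomposition \eqref{orthodecom}, injectivity from irreducibility via Lemma \ref{rema}, invertibility from the open mapping theorem, and then invoke the inverse function theorem. Where you go beyond the paper is in the last two steps: the paper verifies the differential only at the single point $(\phi_E,\nabla)$ and concludes that $\Psi_\nabla$ is a local diffeomorphism \emph{around that point}, leaving implicit how this extends to the rest of $\text{H-Gau}(E)^r_{l+1}\times\mathcal{O}_{\nabla,\epsilon}$ (which is what is actually needed later, e.g.\ in Theorem \ref{main1}, where local diffeomorphism plus bijectivity is used to conclude global diffeomorphism onto $(\widehat{p})^{-1}(\mathcal{U}_{\nabla,\epsilon})$). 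Your equivariance argument $\Psi_\nabla(\psi_0\bigcdot\psi',\nabla+\alpha)=\psi_0\odot\Psi_\nabla(\psi',\nabla+\alpha)$ reduces everything to points $(\phi_E,\nabla+\alpha)$, and your Sobolev-multiplication estimate $\|\mathrm{ad}(\alpha)(\chi)\|_{L^2_l}\le C\|\alpha\|_{L^2_l}\|\chi\|_{L^2_{l+1}}$, combined with openness of the set of invertible operators, shows the differential stays an isomorphism for $\|\alpha\|_{L^2_l}<\epsilon$ with $\epsilon$ small. This supplies the genuine content behind the hypothesis ``$\epsilon$ small enough,'' which the paper never uses in its proof of this theorem; your version is therefore the more complete argument, at the modest cost of the extra perturbation estimate. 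Your remark that the splitting \eqref{orthodecom} (stated in the paper for $\text{End}(E)$) must be checked to restrict to the $\phi_E$-compatible subspace is also a point the paper glosses over.
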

\begin{proof}
The differential map 
$$
d(\Psi_\nabla)_{(\phi_E,\nabla)}:\left[\mathcal{A}^0_\la{L}\big(X,\text{End}_{\phi_E}(E)\big)_l\right]^0\oplus\text{ker}\big((d^\nabla)^\star\big)\rightarrow \mathcal{A}^1_\la{L}\big(X,\text{End}_{\phi_E}(E)\big)_l
$$
given by
$$
d(\Psi_\nabla)_{(\phi_E,\nabla)}(A,B)=d^\nabla(A)+B
$$
Using the $L^2$-orthogonal decomposition Equation \eqref{orthodecom}, the differential map $d(\Psi_\nabla)_{(\phi_E,\nabla)}$ is surjective, it is injective as $\nabla$ is an irreducible $\la{L}$-connection. Using Banach open mapping theorem, the differential $d(\Psi_\nabla)_{(\phi_E,\nabla)}$ is isomorphism and using inverse function theorem, the map $\Psi_\nabla$ is local diffeomorphism around the point $(\phi_E,\nabla)\in \text{H-Gau}(E)^r_{l+1}\times \mathcal{O}_{\nabla,\epsilon}$.
\end{proof}
\begin{theorem}
For $\epsilon>0$ small enough, the map
$$
\widehat{p}_{\nabla,\epsilon}=\widehat{p}_{\mkern 1mu \vrule height 2ex\mkern2mu \mathcal{O}_{\nabla,\epsilon}}:\mathcal{O}_{\nabla,\epsilon}\rightarrow \widehat{B}(E,\la{L})
$$ 
is injective. If $\mathcal{U}_{\nabla,\epsilon}\subset \widehat{B}(E,\la{L})$ be the image, then the map
$$
\widehat{p}_{\nabla,\epsilon}:\mathcal{O}_{\nabla,\epsilon}\rightarrow \mathcal{U}_{\nabla,\epsilon}
$$
is homeomorphism.
\end{theorem}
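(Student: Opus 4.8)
The plan is to derive both assertions from the local-diffeomorphism property of $\Psi_\nabla$ furnished by the preceding theorem, together with a compactness argument showing that any gauge transformation relating two nearby points of the slice $\mathcal{O}_{\nabla,\epsilon}$ must be close to the identity $\phi_E$. For injectivity I would argue by contradiction. If $\widehat{p}_{\nabla,\epsilon}$ failed to be injective for every $\epsilon>0$, then one obtains sequences $\nabla+\alpha_n,\ \nabla+\beta_n\in\mathcal{O}_{\nabla,\epsilon_n}$ with $\epsilon_n\to 0$, so that $\|\alpha_n\|_{L^2_l},\|\beta_n\|_{L^2_l}\to 0$ and $(d^\nabla)^\star\alpha_n=(d^\nabla)^\star\beta_n=0$, together with $\psi_n\in\text{H-Gau}(E)_{l+1}$ satisfying $\psi_n\odot(\nabla+\alpha_n)=\nabla+\beta_n$ while $\alpha_n\neq\beta_n$. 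Since the relation depends only on the class $[\psi_n]\in\text{H-Gau}(E)^r_{l+1}$, I may normalize the representative by the $\mathbb{K}^\star$-scaling so that $\|\psi_n\|_{L^2}=1$; the gauge-transformation law of Remark \ref{connrem} (equivalently Equation \eqref{equa3}) then recasts the relation as a first-order equation of the schematic form $d^\nabla\psi_n=Q(\alpha_n,\beta_n,\psi_n)$, whose right-hand side is bilinear in $\psi_n$ and the small quantities $\alpha_n,\beta_n$.

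The analytic heart is the passage to the limit. Applying $(d^\nabla)^\star$ and using ellipticity of $\Delta=(d^\nabla)^\star\circ d^\nabla$ (established earlier for the transitive case), the Gårding-type elliptic estimate $\|\psi_n\|_{L^2_{l+1}}\le C\big(\|\Delta\psi_n\|_{L^2_{l-1}}+\|\psi_n\|_{L^2}\big)$ combined with the Sobolev multiplication theorem (valid since $l>\frac{1}{2}\text{dim}_\mathbb{R}(X)$) yields a uniform bound $\|\psi_n\|_{L^2_{l+1}}\le C$. The compact embedding $L^2_{l+1}\hookrightarrow L^2_l$ then extracts a subsequence with $\psi_n\to\psi_\infty$ in $L^2_l$, and a standard bootstrap upgrades this to $L^2_{l+1}$. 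Passing to the limit (with $\alpha_n,\beta_n\to 0$) gives $d^\nabla\psi_\infty=0$, i.e. $\psi_\infty\in\text{ker}(\nabla^{\text{End}(E)})$; by irreducibility of $\nabla$ and Lemma \ref{rema}, $\psi_\infty\in\mathbb{K}\bigcdo\phi_E$, and the normalization forces $\psi_\infty=c\,\phi_E$ with $c\neq 0$. Hence $[\psi_n]\to[\phi_E]$ in $\text{H-Gau}(E)^r_{l+1}$, so for $n$ large both $\big([\psi_n],\nabla+\alpha_n\big)$ and $\big([\phi_E],\nabla+\beta_n\big)$ lie in the neighborhood of $([\phi_E],\nabla)$ on which $\Psi_\nabla$ is a diffeomorphism. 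Since $\Psi_\nabla([\psi_n],\nabla+\alpha_n)=\nabla+\beta_n=\Psi_\nabla([\phi_E],\nabla+\beta_n)$, injectivity of $\Psi_\nabla$ gives $\alpha_n=\beta_n$, a contradiction, and thus fixes an $\epsilon>0$ for which $\widehat{p}_{\nabla,\epsilon}$ is injective. I expect this compactness step -- bounding the $\psi_n$ uniformly and identifying the limit with the stabilizer -- to be the main obstacle, since it is the only place requiring genuine analysis rather than formal manipulation.

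For the homeomorphism statement, continuity of $\widehat{p}_{\nabla,\epsilon}$ is immediate, being the restriction of the continuous quotient map $\widehat{p}$. It then suffices to show that $\widehat{p}_{\nabla,\epsilon}:\mathcal{O}_{\nabla,\epsilon}\to\mathcal{U}_{\nabla,\epsilon}$ is an open map. Given an open set $V\subset\mathcal{O}_{\nabla,\epsilon}$, its saturation $\text{H-Gau}(E)_{l+1}\bigcdot V$ is open in $\widehat{A}(E,\la{L})_l$, because near each relevant point the action map is realized by the local diffeomorphism $\Psi_\nabla$ and is therefore open; consequently $\widehat{p}(V)=\widehat{p}\big(\text{H-Gau}(E)_{l+1}\bigcdot V\big)$ is open in $\widehat{B}(E,\la{L})_l$, and hence open in $\mathcal{U}_{\nabla,\epsilon}$. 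In particular, taking $V=\mathcal{O}_{\nabla,\epsilon}$ shows $\mathcal{U}_{\nabla,\epsilon}$ is open. Therefore $\widehat{p}_{\nabla,\epsilon}$ is a continuous open bijection onto $\mathcal{U}_{\nabla,\epsilon}$, i.e. a homeomorphism.
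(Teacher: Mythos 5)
Your proposal is correct in substance, but it reaches the injectivity statement by a genuinely different route from the paper. The paper argues directly and quantitatively: it splits the gauge transformation as $\psi=c\,\phi_E+\psi_0$ with $\psi_0\in\ker(d^\nabla)^\perp$, invokes the lower bound $\rho\|\psi_0\|_{L^2_{l+1}}\le\|d^\nabla\psi\|_{L^2_l}$ coming from the open mapping theorem (available because $\text{im}(d^\nabla)$ is closed), bounds the right-hand side by $2\tilde{\rho}\,\epsilon\,\|\phi_E^{-1}\|\,\|\psi\|$ using the gauge-transformation identity and Sobolev multiplication, and absorbs to conclude $\|c^{-1}\psi-\phi_E\|=O(\epsilon)$ uniformly; this produces an explicit admissible threshold for $\epsilon$ before handing off to the local injectivity of $\Psi_\nabla$. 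You instead run a soft contradiction argument: sequences with $\epsilon_n\to 0$, the normalization $\|\psi_n\|_{L^2}=1$, a uniform $L^2_{l+1}$ bound from the elliptic/Fredholm estimate plus absorption, compact embedding to extract a limit, identification of the limit with $\mathbb{K}\bigcdo\phi_E$ via Lemma \ref{rema}, and then the same appeal to local injectivity of $\Psi_\nabla$. Both arguments rest on the same functional-analytic input (closed range of $d^\nabla$, equivalently Fredholmness of $\Delta$) and converge to the same endgame; yours trades the explicit $\epsilon$-bound for a constant-free but non-constructive argument, and it does genuinely need the bootstrap from $L^2_l$ to $L^2_{l+1}$ convergence that you flag, since convergence only in $L^2_l$ would not give $[\psi_n]\to[\phi_E]$ in $\text{H-Gau}(E)^r_{l+1}$. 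The homeomorphism half is essentially the paper's argument (continuity of the restricted quotient map plus openness via saturation); in both write-ups the openness of the saturation ultimately relies on $\Psi_\nabla$ being an open map near the slice, since a set open in $\mathcal{O}_{\nabla,\epsilon}$ is not open in $\widehat{A}(E,\la{L})_l$, and your explicit mention of this is if anything slightly more careful than the paper's.
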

\begin{proof}
For any two $\la{L}$-connections in $\mathcal{O}_{\nabla,\epsilon}$, say $\nabla+\alpha_1,\nabla+\alpha_2$, where $\alpha_i(i=1,2)\in \mathcal{A}^1_\la{L}\big(X,\text{End}_{\phi_E}(E)\big)_l$ such that there is a $\psi\in \mathcal{A}^0_\la{L}\big(X,\text{End}_{\phi_E}(E)\big)_{l+1}$ satisfying
\begin{equation}\label{eq1}
\nabla+\alpha_2=\psi\odot (\nabla+\alpha_2)
\end{equation}
then we need to show $\alpha_1=\alpha_2$.

Using the $L^2$-orthogonal decomposition
$$
\mathcal{A}^0_\la{L}\big(X,\text{End}(E)\big)_{l+1}=\text{ker}(d^\nabla)\oplus \text{ker}(d^\nabla)^\perp
$$
and the lemma \ref{rema}, let $\psi=c\phi_E+\psi_0$, where $\psi_0\in \text{ker}(d^\nabla)^\perp\cap \mathcal{A}^0_\la{L}\big(X,\text{End}_{\phi_E}(E)\big)_{l+1}$ and $c\in K$. Using Banach open mapping theorem, the map
$$
d^\nabla:\text{ker}(d^\nabla)^\perp\rightarrow \text{im}(d^\nabla)
$$
is an isomorphism between Hilbert spaces; hence a lower bounded operator. For $\psi_0\in \text{Gau}(E)_{l+1}$, let $\rho\in \mathbb{R}$ be a positive real number such that
$$
\rho\|\psi_0\|_{L^2_{l+1}}\leq \|d^\nabla(\psi_0)\|_{L^2_l}=\|d^\nabla(\psi)\|_{L^2_l}=\|(\phi^\dagger_\la{L})^{-1}\otimes \phi_E^{-1}\circ d^\nabla(\psi)\circ \phi_E\|_{L^2_l}\quad\big(\text{by def. of metric}\big)
$$
Using Remark \ref{connrem}, for the Equation \eqref{eq1}, we have
$$
\rho\|\psi_0\|_{L^2_{l+1}}\leq \|d^\nabla(\psi)\|_{L^2_l}=\|\text{id}_{\la{L}^\star}\otimes(\psi\circ \phi_E^{-1})\circ \alpha_2-\alpha_1\circ \phi_E^{-1}\circ \psi\|_{L^2_l}
$$
Using triangle inequality
\begin{align*}
\rho\|\psi_0\|_{L^2_{l+1}}&\leq \|\text{id}_{\la{L}^\star}\otimes(\psi\circ \phi_E^{-1})\circ \alpha_2\|_{L^2_l}+\|\alpha_1\circ \phi_E^{-1}\circ \psi\|_{L^2_l}\\
&\leq \rho_2\|\phi_E^{-1}\|_{L^2_{l+1}}\|\psi\|_{L^2_{l+1}}\|\alpha_2\|_{L^2_l}+\rho_1\|\phi_E^{-1}\|_{L^2_{l+1}}\|\psi\|_{L^2_{l+1}}\|\alpha_1\|_{L^2_l}\\
&\leq 2\tilde{\rho}\epsilon \|\phi_E^{-1}\|_{L^2_{l+1}}\|\psi\|_{L^2_{l+1}}\quad \big(\tilde{\rho}=\text{max}.(\rho_1,\rho_2);\|\alpha_1\|,\|\alpha_2\|<\epsilon\big)\\
&\leq 2\tilde{\rho}\epsilon \|\phi_E^{-1}\|_{L^2_{l+1}}\big(|c|\|\phi_E\|_{L^2_{l+1}}+\|\psi_0\|_{L^2_{l+1}}\big)
\end{align*}
we have
\begin{align*}
\|\psi_0\|_{L^2_{l+1}}&\leq\frac{2|c|\tilde{\rho}\epsilon\|\phi_E\|_{L^2_{l+1}}\|\phi_E^{-1}\|_{L^2_{l+1}}}{\rho-2\tilde{\rho}\epsilon\|\phi_E^{-1}\|_{L^2_{l+1}}}
\end{align*}
for $\epsilon<\frac{\rho}{2\tilde{\rho}\|\phi_E^{-1}\|_{L^2_{l+1}}}$. Note that $c\neq 0$ (otherwise $\psi_0=0$ implies $\psi$ is trivial) and 
$$
\|c^{-1}\psi-\phi_E\|=\frac{1}{|c|}\|\psi_0\|_{L^2_{l+1}}\leq\frac{2\tilde{\rho}\epsilon\|\phi_E\|_{L^2_{l+1}}\|\phi_E^{-1}\|_{L^2_{l+1}}}{\rho-2\tilde{\rho}\epsilon\|\phi_E^{-1}\|_{L^2_{l+1}}}
$$
For $\epsilon>0$ small enough, we can assume $\psi$ is near $\phi_E$ in $\text{H-Gau}(E)^r_{l+1}$, using injectivity of $\Psi_\nabla$ in above theorem, we have $\alpha_1=\alpha_2$.

Hence the map, $\widehat{p}_{\nabla,\epsilon}:\mathcal{O}_{\nabla,\epsilon}\rightarrow \mathcal{U}_{\nabla,\epsilon}$ is bijective continuous map, to prove homeomorphism we need to check if $(\widehat{p}_{\nabla,\epsilon})$ is an open map, which is true because action of $\text{H-Gau}(E)_{l+1}$ on $\widehat{A}(E,\la{L})_{l}$ is continuous and for any open subset $U\subset \mathcal{O}_{\nabla,\epsilon}$, we have
$$
(\widehat{p}_{\nabla,\epsilon})^{-1}\big((\widehat{p}_{\nabla,\epsilon})(U)\big)=\bigcupdot_{g\in \text{H-Gau}(E)_{l+1}} g\odot U
$$
which is open in $\widehat{A}(E,\la{L})_l$, implies $(\widehat{p}_{\nabla,\epsilon})(U)$ is open in $\mathcal{U}_{\nabla,\epsilon}$ for a given open subset $U\in \mathcal{O}_{\nabla,\epsilon}$.
\end{proof}
\begin{theorem}[Main theorem 1]\label{main1}
The map
\begin{align*}
\Psi_\nabla:\emph{H-Gau}(E)^r_{l+1}\times \mathcal{O}_{\nabla,\epsilon}&\rightarrow (\widehat{p})^{-1}(\mathcal{U}_{\nabla,\epsilon})\subset \widehat{A}(E,\la{L})_l\\
(g,\nabla)&\mapsto g\odot \nabla
\end{align*}
is diffeomorphism. In other words, the fibre bundle $\widehat{p}:\widehat{A}(E,\la{L})\rightarrow \widehat{B}(A,\la{L})_l$ is a principal ${\emph{H-Gau}(E)}^r_{l+1}$-bundle with trivialization $\{(\mathcal{U}_{\nabla_l,\epsilon_l},\Psi_{\nabla_l})\}_{l\in I}$.
\end{theorem}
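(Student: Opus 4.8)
The plan is to upgrade the two preceding theorems---that $\Psi_\nabla$ is smooth and a local diffeomorphism, and that the slice map $\widehat{p}_{\nabla,\epsilon}$ is a homeomorphism onto $\mathcal{U}_{\nabla,\epsilon}$---into a global diffeomorphism statement, and then to verify the compatibility conditions that promote the resulting charts into a principal bundle atlas. First I would establish that $\Psi_\nabla$ is a bijection onto $(\widehat{p})^{-1}(\mathcal{U}_{\nabla,\epsilon})$. For injectivity, suppose $\psi_1\odot(\nabla+\alpha_1)=\psi_2\odot(\nabla+\alpha_2)$; applying $\widehat{p}$ and using that both points lie over $\mathcal{U}_{\nabla,\epsilon}$ forces $\widehat{p}_{\nabla,\epsilon}(\nabla+\alpha_1)=\widehat{p}_{\nabla,\epsilon}(\nabla+\alpha_2)$, so the injectivity of the slice map gives $\alpha_1=\alpha_2$. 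The freeness of the reduced $\text{H-Gau}(E)^r_{l+1}$-action on $\widehat{A}(E,\la{L})_l$ then yields $\psi_1=\psi_2$ in $\text{H-Gau}(E)^r_{l+1}$.

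For surjectivity, I would take any $\nabla'\in(\widehat{p})^{-1}(\mathcal{U}_{\nabla,\epsilon})$. Since $\widehat{p}(\nabla')\in\mathcal{U}_{\nabla,\epsilon}=\widehat{p}_{\nabla,\epsilon}(\mathcal{O}_{\nabla,\epsilon})$, there is a unique $\nabla+\alpha\in\mathcal{O}_{\nabla,\epsilon}$ with $\widehat{p}(\nabla+\alpha)=\widehat{p}(\nabla')$; because $\nabla'$ and $\nabla+\alpha$ lie in the same $\text{H-Gau}(E)^r_{l+1}$-orbit, freeness produces a unique $\psi$ with $\nabla'=\psi\odot(\nabla+\alpha)=\Psi_\nabla(\psi,\nabla+\alpha)$. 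Combining the two, $\Psi_\nabla$ is a smooth bijection that is, by the earlier theorem, a local diffeomorphism; a bijective local diffeomorphism is automatically a diffeomorphism, since its smooth local inverses patch together into a global smooth inverse.

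The remaining step is to check that $\{(\mathcal{U}_{\nabla_l,\epsilon_l},\Psi_{\nabla_l})\}_{l\in I}$ is a principal bundle atlas. I would first cover $\widehat{B}(E,\la{L})_l$ by the open sets $\mathcal{U}_{\nabla_l,\epsilon_l}$, which is possible because each $\widehat{p}_{\nabla,\epsilon}$ is an open map onto its image, as established in the previous theorem. Next I would verify $\text{H-Gau}(E)^r_{l+1}$-equivariance of each trivialization: the identity $\Psi_\nabla(g\bigcdot\psi,\nabla+\alpha)=g\odot\Psi_\nabla(\psi,\nabla+\alpha)$ is immediate from the left-action property $(g\bigcdot\psi)\odot(\nabla+\alpha)=g\odot(\psi\odot(\nabla+\alpha))$. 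Finally, on overlaps the transition maps are $\Psi_{\nabla_j}^{-1}\circ\Psi_{\nabla_i}$, whose smoothness follows from the diffeomorphism property just proved together with Theorem \ref{toprove}, which ensures the quotient structure on the reduced gauge group remains smooth.

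The hard part, and the place where irreducibility is indispensable, is the injectivity argument: it rests on the freeness of the reduced gauge action, which in turn comes from Lemma \ref{rema} characterizing irreducibility via $\text{ker}(\nabla^{\text{End}(E)})=\mathbb{K}\bigcdo\phi_E$. Without freeness, distinct gauge elements could fix a connection and $\Psi_\nabla$ would cease to be injective, collapsing the principal structure to a mere fibre bundle. The quantitative slice estimate from the preceding theorem---bounding $\|\psi_0\|_{L^2_{l+1}}$ linearly in $\epsilon$---is precisely what keeps the gauge element realizing an orbit equivalence near $\phi_E$, confining us to the region where the local diffeomorphism $\Psi_\nabla$ is genuinely injective.
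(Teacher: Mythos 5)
Your argument is essentially the paper's own: injectivity from the injectivity of the slice map $\widehat{p}_{\nabla,\epsilon}$ combined with freeness of the reduced gauge action, surjectivity from $\mathcal{U}_{\nabla,\epsilon}$ being exactly the orbit classes of $\mathcal{O}_{\nabla,\epsilon}$, and then the observation that a bijective local diffeomorphism is a diffeomorphism. The extra checks you add (equivariance and smoothness of transition maps for the atlas) go slightly beyond what the paper writes down but do not change the approach.
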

\begin{proof}
Note that the image of $\mathcal{O}_{\nabla,\epsilon}$ under $\widehat{p}_{\nabla,\epsilon}$ is the $\text{H-Gau}^r_{l+1}(E)$-equivalence classes of connections in $\mathcal{O}_{\nabla,\epsilon}$ implies $\Psi_\nabla$ is surjective.

The action of $\text{H-Gau}^r_{l+1}(E)$ on $\widehat{A}(E,\la{L})$ is a free action and using injectivity of $\widehat{p}_{\nabla,\epsilon}$, it is easy to verify $\Psi_\nabla$ is injective, hence bijective.

The map 
$$
\Psi_\nabla:\emph{H-Gau}(E)^r_{l+1}\times \mathcal{O}_{\nabla,\epsilon}\rightarrow (\widehat{p})^{-1}(\mathcal{U}_{\nabla,\epsilon})
$$
is local diffeomorphism and bijective implies, it is diffeomorphism.
\end{proof}
\begin{theorem}[Main theorem 2]\label{main2}
The space $\widehat{B}(E,\la{L})_l$ is a smooth Hilbert manifold.
\end{theorem}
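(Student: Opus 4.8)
The plan is to turn the slice neighbourhoods $\mathcal{O}_{\nabla,\epsilon}$ produced in the preceding theorems into an atlas and to show that this atlas is smooth and Hausdorff. First I would fix, for each irreducible Sobolev connection $\nabla$ and each $\epsilon>0$ small enough for the homeomorphism theorem above to apply, the slice $\mathcal{O}_{\nabla,\epsilon}$ together with the homeomorphism $\widehat{p}_{\nabla,\epsilon}:\mathcal{O}_{\nabla,\epsilon}\to \mathcal{U}_{\nabla,\epsilon}$ onto the open set $\mathcal{U}_{\nabla,\epsilon}\subset \widehat{B}(E,\la{L})_l$. Since each class $[\nabla]$ lies in its own $\mathcal{U}_{\nabla,\epsilon}$, the family $\{\mathcal{U}_{\nabla,\epsilon}\}$ is an open cover. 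Identifying $\mathcal{O}_{\nabla,\epsilon}$ with the open $\epsilon$-ball in the closed subspace $\text{Ker}\big((d^\nabla)^\star\big)\subset \mathcal{A}^1_\la{L}\big(X,\text{End}_{\phi_E}(E)\big)_l$ by $\nabla+\alpha\mapsto \alpha$, the composite $\varphi_\nabla:=(\nabla+\,\cdot\,)^{-1}\circ \widehat{p}_{\nabla,\epsilon}^{-1}$ is a homeomorphism of $\mathcal{U}_{\nabla,\epsilon}$ onto an open subset of a separable Hilbert space; these are the candidate charts.

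The smoothness of the transition maps is where Theorem \ref{main1} does the work. On an overlap $\mathcal{U}_{\nabla,\epsilon}\cap\mathcal{U}_{\nabla',\epsilon'}$ I would write $\varphi_{\nabla'}\circ\varphi_\nabla^{-1}$ explicitly: for $\alpha$ with $\nabla+\alpha$ lying over the overlap, the diffeomorphism $\Psi_{\nabla'}$ of Theorem \ref{main1} factors $\nabla+\alpha=\Psi_{\nabla'}(\psi,\nabla'+\beta)$ for a unique pair $(\psi,\beta)$, whence
\[
\varphi_{\nabla'}\circ\varphi_\nabla^{-1}(\alpha)=\beta=\mathrm{pr}_2\circ \Psi_{\nabla'}^{-1}(\nabla+\alpha).
\]
This is a composite of the affine inclusion $\alpha\mapsto\nabla+\alpha$, the diffeomorphism $\Psi_{\nabla'}^{-1}$, and the smooth projection onto the slice factor, hence smooth; the same applies to its inverse. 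Thus the transitions are diffeomorphisms between open subsets of Hilbert spaces, and since all the kernels $\text{Ker}\big((d^\nabla)^\star\big)$ are closed infinite-dimensional subspaces of one separable Hilbert space they are mutually isomorphic, so $\widehat{B}(E,\la{L})_l$ carries a smooth Hilbert atlas.

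The remaining, genuinely global point is Hausdorffness, which the local charts do not by themselves provide. Here I would argue from properness of the reduced action of $\text{H-Gau}(E)^r_{l+1}$ on the irreducible locus $\widehat{A}(E,\la{L})_l$, equivalently closedness of the orbit relation: for distinct classes $[\nabla_1]\neq[\nabla_2]$ one uses the slices $\mathcal{O}_{\nabla_1,\epsilon},\mathcal{O}_{\nabla_2,\epsilon}$ together with the injectivity estimate proved above --- which for $\epsilon$ small forces any gauge transformation carrying one slice element into the other to lie in $\mathbb{K}^\star\bigcdo\phi_E$ --- to manufacture disjoint saturated neighbourhoods separating the two classes.

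I expect this last step to be the main obstacle. All of the differentiable structure is already packaged in Theorem \ref{main1}, so building the atlas and checking smooth transitions is essentially formal; what requires care is the properness (closedness of orbits) on $\widehat{A}(E,\la{L})_l$, since only this upgrades the locally Hilbert space $\widehat{B}(E,\la{L})_l$ from a possibly non-separated manifold to a genuine Hausdorff Hilbert manifold. With that in hand, $\widehat{B}(E,\la{L})_l$ equipped with $\{(\mathcal{U}_{\nabla,\epsilon},\varphi_\nabla)\}$ is a smooth Hilbert manifold.
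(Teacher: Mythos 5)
Your atlas is the same one the paper uses: the paper's chart map $\sigma_\nabla(\nabla')=[g_\nabla(\nabla')]^{-1}\odot\nabla'$, with $g_\nabla=pr_1\circ\Psi_\nabla^{-1}$, is exactly your $\mathrm{pr}_2\circ\Psi_{\nabla}^{-1}$ followed by the identification of $\mathcal{O}_{\nabla,\epsilon}$ with a ball in $\ker\big((d^\nabla)^\star\big)$, and both arguments obtain smoothness of the transition functions from the diffeomorphism $\Psi_\nabla$ of Theorem \ref{main1} together with smoothness of the Sobolev gauge action. So the differentiable-structure part of your argument is correct and essentially identical to the paper's proof.

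Where you diverge is in insisting on Hausdorffness. The paper's proof stops after exhibiting the charts and the smooth transitions and never addresses separation of points, even though the abstract advertises a Hausdorff Hilbert manifold; you are right that this is the one genuinely global input that the slice construction does not supply. However, your treatment of it is only a sketch: you assert properness (closedness of the orbit relation) of the reduced action of $\text{H-Gau}(E)^r_{l+1}$ on $\widehat{A}(E,\la{L})_l$ and say you would ``manufacture disjoint saturated neighbourhoods,'' but you do not prove the closedness claim, and it does not follow formally from the injectivity estimate, which only controls gauge transformations carrying one point of a \emph{single} slice to another point of the \emph{same} slice. A complete argument would need, for instance, a uniform bound on gauge transformations $\psi$ with $\psi\odot\mathcal{O}_{\nabla_1,\epsilon}\cap\mathcal{O}_{\nabla_2,\epsilon}\neq\emptyset$ together with a compactness or weak-convergence argument showing the orbit relation is closed. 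As written, that step is a gap in your proof --- though it is a gap the paper shares, since the paper omits the issue entirely.
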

\begin{proof}
Consider the composition of maps
$$
g_\nabla:(\widehat{p})^{-1}(\mathcal{U}_{\nabla,\epsilon})\xrightarrow{\psi_\nabla}\text{H-Gau}(E)^r_{l+1}(X)\times \mathcal{O}_{\nabla,\epsilon}\xrightarrow{pr_1}\text{H-Gau}(E)^r_{l+1}
$$
where $pr_1:\text{H-Gau}(E)^r_{l+1}(X)\times \mathcal{O}_{\nabla,\epsilon}\rightarrow \text{H-Gau}(E)^r_{l+1}(X)$ is first projection map. There is a canonical co-ordinate chart $(\mathcal{U}_{\nabla,\epsilon},\sigma_\nabla)$ around $\nabla\in \widehat{B}(E,\la{L})_l$ with co-ordinate map
$$
\sigma_\nabla:\mathcal{U}_{\nabla,\epsilon}\rightarrow \mathcal{O}_{\nabla,\epsilon}\qquad(\sigma_\nabla(\nabla')=[g_\nabla(\nabla')]^{-1}\odot \nabla'
$$
It is easy to verify that $\sigma_\nabla\circ \widehat{p}_{\nabla,\epsilon}=\text{id}_{\mathcal{O}_{\nabla,\epsilon}}$ and $\widehat{p}_{\nabla,\epsilon}\circ \sigma_\nabla=\text{id}_{\mathcal{U}_{\nabla,\epsilon}}$.

Take two charts $(\mathcal{U}_{\nabla_l,\epsilon_l},\sigma_{\nabla_l})$ $(l=1,2)$ and $\nabla=\nabla_1+\alpha_1\in \mathcal{U}_{\nabla_1,\epsilon_1}$ such that $\widehat{p}_{\nabla_1,\epsilon_1}(\nabla)\in \mathcal{U}_{\nabla_1,\epsilon_1}\cap\mathcal{U}_{\nabla_2,\epsilon_2}$, we have
$$
\sigma_{\nabla_2}\circ \sigma_{\nabla_1}^{-1}(\nabla_1+\alpha_1)=\sigma_{\nabla_2}\big(\widehat{p}_{\nabla_1,\epsilon_1}(\nabla_1+\alpha_1)\big)=[g_{\nabla_2}(\nabla_1+\alpha_1)]^{-1}\odot (\nabla_1+\alpha_1)
$$
Since the Sobolev gauge action is a smooth map, the transition map is smooth and the space $\widehat{B}(E,\la{L})_l$ has smooth Hilbert manifold structure.
\end{proof}

\end{document}